\documentclass{article}

\addtolength{\textwidth}{55pt}
\addtolength{\textheight}{20pt}
\addtolength{\oddsidemargin}{-28pt}
\addtolength{\evensidemargin}{-28pt}
\addtolength{\headheight}{-25pt}

\usepackage{amsfonts}
\usepackage{amsmath}
\usepackage{amssymb}
\usepackage{wasysym}
\usepackage{yfonts}  
\usepackage{amsthm}
\usepackage{amscd}   
\usepackage[all]{xypic}
\usepackage{mathrsfs}
  
\CompileMatrices

\newtheorem{theorem}{Theorem}[section]
\newtheorem{lemma}[theorem]{Lemma}
\newtheorem{proposition}[theorem]{Proposition}
\newtheorem{corollary}[theorem]{Corollary} 
\newtheorem*{definition}{Definition}
\newtheorem*{example}{Example}
\newtheorem*{remark}{Remark}
\newtheorem*{notation}{Notation and terminology}
\newtheorem*{acknowledgement}{Acknowledgement}

\numberwithin{equation}{section}

\begin{document}
\title{Cohomology and profinite topologies for solvable groups of finite rank}
\author{
{\sc Karl Lorensen}\\
Pennsylvania State University, Altoona College\\
Altoona, PA 16601\\
USA\\
e-mail: {\tt kql3@psu.edu}
}

\maketitle
 
 \centerline {\it Dedicated to the memory of Peter Hilton.}
\vspace{10pt}

\begin{abstract} Assume $G$ is a solvable group whose elementary abelian sections are all finite. Suppose, further, that $p$ is a prime such that $G$ fails to contain any subgroups isomorphic to $C_{p^\infty}$. We show that if $G$ is nilpotent, then the pro-$p$ completion map $G\to \hat{G}_p$ induces an isomorphism $H^\ast(\hat{G}_p,M)\to H^\ast(G,M)$ for any discrete $\hat{G}_p$-module $M$ of finite $p$-power order.  For the general case, we prove that $G$ contains a normal subgroup $N$ of finite index such that the map $H^\ast(\hat{N}_p,M)\to H^\ast(N,M)$ is an isomorphism for any discrete $\hat{N}_p$-module $M$ of finite $p$-power order. Moreover, if $G$ lacks any $C_{p^\infty}$-sections, the subgroup $N$ enjoys some additional special properties with respect to its pro-$p$ topology.
\vspace{10pt}

\noindent {\bf Mathematics Subject Classification (2010)}:  20F16, 20J06, 20E18 (primary), 20F18 (secondary)
\end{abstract}
\vspace{20pt}

\section{Introduction}

A group with {\it finite abelian section rank}-- referred to in \cite{lennox} as an {\it FAR-group}-- is a group whose abelian sections each have finite torsion-free rank as well as finite $p$-rank for every prime $p$. The present article examines circumstances under which the cohomology of a solvable FAR-
group coincides with that of either its profinite or pro-$p$ completion. With regard to profinite
completion, we establish the following theorem.

\begin{theorem} Assume $p$ is a prime and $G$ a solvable FAR-group without any subgroups isomorphic to $C_{p^\infty}$. Then the profinite completion map $G\to \hat{G}$ induces an isomorphism
\begin{displaymath} H^\ast(\hat{G},M)\to H^\ast(G,M)\end{displaymath}
for any discrete $\hat{G}$-module $M$ of finite $p$-power order. 
\end{theorem}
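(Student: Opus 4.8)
The plan is to reduce $H^\ast(\hat{G},M)\to H^\ast(G,M)$, by a sequence of standard moves, to the pro-$p$ statement already quoted, the last and hardest step being a comparison of the full profinite completion of a suitably chosen finite-index subgroup with its pro-$p$ completion. Since $M$ is finite, $G$ acts on it through a finite quotient of $\mathrm{Aut}(M)$; replacing $G$ by the preimage of a Sylow $p$-subgroup of that image --- a subgroup of index prime to $p$ --- and using that for such a subgroup the restriction map on $p$-primary cohomology is split injective with image described by the Mackey double-coset formula (operations natural in the group, hence commuting with the comparison to profinite completions), one reduces to the case in which $G$ acts on $M$ through a finite $p$-group. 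Now invoke the solvable pro-$p$ theorem to obtain a finite-index normal subgroup $N$ of $G$ for which $H^\ast(\hat{N}_p,-)\to H^\ast(N,-)$ is an isomorphism on all discrete $\hat{N}_p$-modules of finite $p$-power order; I shall arrange below that $N$ also has the property that $\hat{N}$ admits a normal pro-$p'$ subgroup with pro-$p$ quotient. Because $N$ has finite index, the profinite topology of $G$ restricts on $N$ to $N$'s own profinite topology, so $1\to\hat{N}\to\hat{G}\to G/N\to 1$ is a genuine extension of profinite groups; comparing its Lyndon--Hochschild--Serre spectral sequence with that of $1\to N\to G\to G/N\to 1$ and invoking naturality, the theorem reduces to showing that $H^\ast(\hat{N},M)\to H^\ast(N,M)$ is an isomorphism.

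\emph{The bridge $\hat{N}\to\hat{N}_p$.} Let $K$ be the kernel of the projection $\hat{N}\twoheadrightarrow\hat{N}_p$ onto the maximal pro-$p$ quotient. Because $M$ is inflated from $\hat{N}_p$, the subgroup $K$ acts trivially on $M$, so in the spectral sequence $H^s(\hat{N}_p,H^t(K,M))\Rightarrow H^{s+t}(\hat{N},M)$ one has $E_2^{s,t}=0$ for $t>0$ and $E_2^{s,0}=H^s(\hat{N}_p,M)$ as soon as $K$ is a pro-$p'$ group; the sequence then degenerates and identifies $H^\ast(\hat{N},M)$ with $H^\ast(\hat{N}_p,M)$ compatibly with the maps to $H^\ast(N,M)$. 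Together with the pro-$p$ isomorphism this completes the proof. So everything comes down to showing $K$ pro-$p'$ --- equivalently, that the Sylow pro-$p$ subgroup of $\hat{N}$ maps isomorphically onto $\hat{N}_p$, equivalently that $\hat{N}$ has a normal pro-$p'$ subgroup with pro-$p$ quotient, which is exactly the extra property of $N$ promised above.

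\emph{The main obstacle.} Establishing that extra property of $N$ is the heart of the argument, and this is where the absence of a copy of $C_{p^\infty}$ in $G$ is indispensable: for $G=C_{p^\infty}$ one has $\hat{G}=1=\hat{G}_p$ whereas $H^\ast(C_{p^\infty},\mathbb{Z}/p)\ne H^\ast(1,\mathbb{Z}/p)$, so a hypothesis of this sort is forced. Here is the scheme. Using the structure theory of solvable FAR-groups, choose $N$ to carry a finite normal series whose successive factors are finite, torsion-free abelian of finite rank, or a direct sum of finitely many copies of $C_{q^\infty}$ with $q\ne p$; the $C_{p^\infty}$-free hypothesis is precisely what allows the torsion-free layers to be taken isolated (hence genuinely torsion-free rather than merely abelian) and confines the divisible torsion to primes $\ne p$. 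Each term of the series is closed in the profinite topology of $N$ with induced topology equal to its own profinite topology, so one may climb the series with the honest profinite Lyndon--Hochschild--Serre spectral sequence and verify, layer by layer, that $\ker(\hat{N}\to\hat{N}_p)$ stays pro-$p'$: a torsion-free abelian finite-rank layer $A$ contributes (via $\hat{A}=\prod_\ell\hat{A}_\ell$, each $\hat{A}_\ell$ a finitely generated pro-$\ell$ group) only its $\ell$-adic completions for $\ell\ne p$; a $C_{q^\infty}$-layer ($q\ne p$) contributes nothing, its completion being trivial; and a finite layer can, by a judicious initial choice of $N$ among the finite-index subgroups of $G$, be arranged so that its $p$-part lands in $\hat{N}_p$ and does not survive into the kernel. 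The two points where I expect the real work to lie are (i) the torsion-free abelian base case --- showing directly that $H^\ast(\hat{A},M)\cong H^\ast(A,M)$ for a finite-rank torsion-free abelian group $A$, equivalently that its $p$-primary cohomology vanishes above the $p$-adic rank of $A$ and there coincides with that of $\hat{A}_p$ (exactly the phenomenon that fails for $C_{p^\infty}$); and (ii) the selection of $N$ so that the finite layers and their cross-prime actions do not obstruct pro-$p'$-ness of $\ker(\hat{N}\to\hat{N}_p)$, i.e.\ the extra pro-$p$-topological property of $N$.
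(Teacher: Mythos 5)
Your reduction steps are individually sound (the prime-to-$p$ transfer argument, the LHS comparison over the finite quotient $G/N$, and the observation that if $K=\ker(\hat N\to\hat N_p)$ is pro-$p'$ then inflation $H^\ast(\hat N_p,M)\to H^\ast(\hat N,M)$ is an isomorphism), but the whole weight of the proof is thereby shifted onto the unproven assertion that some $N\unlhd_f G$ has $K$ pro-$p'$ --- equivalently, that every finite quotient of $N$ is ($p'$-group)-by-($p$-group) --- and your sketch of that assertion contains a misstep at exactly the critical point. You claim a torsion-free abelian finite-rank layer $A$ ``contributes only its $\ell$-adic completions for $\ell\ne p$'' to the kernel. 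That is false without a further shrinking of $N$: the pro-$p$ part $\hat A_p$ of such a layer lands inside $\ker(\hat N\to\hat N_p)$ whenever the conjugation action of $N/A$ on $A/p^nA$ is not through a $p$-group (already for $N=\mathbb{Z}/p\rtimes\mathbb{Z}$ with $t$ acting by a generator of $(\mathbb{Z}/p)^\times$, or for $\mathbb{Z}^2\rtimes_A\mathbb{Z}$ with $A$ of order $3$ mod $2$, the finite quotients $\mathbb{Z}/p^n\rtimes\mathbb{Z}/m$ are not $p'$-by-$p$ and the kernel is not pro-$p'$). You acknowledge this phenomenon only for ``finite layers,'' but it is the torsion-free layers, acting on their own congruence quotients, that require the real argument --- one must pass to the subgroup centralizing $A/pA$ (a congruence-subgroup condition forcing the action on every $A/p^nA$ to be unipotent mod $p$, hence a $p$-group) and then prove that this single choice controls all the quotients $A/p^nA$ simultaneously. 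That is precisely the content of the paper's choice $N=C_M(A/A^p)$ and the induction showing $A\leq_{t(p)}N$ in the proofs of Theorems 3.1 and 3.2, so the ``extra property'' you promise is not a routine bookkeeping step but the hard core of Section 3 of the paper, and as written your layer-by-layer verification would fail on the torsion-free layers.

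It is also worth noting that the paper's own proof of this theorem avoids your bridge entirely and is substantially shorter: Weigel's lemma (Lemma 2.2) says that if every finite-index subgroup of a group lies in $\mathcal{G}(p)$ then the group lies in $\mathcal{G}^\ast(p)$, which converts the nilpotent pro-$p$ theorem directly into the statement that abelian (indeed nilpotent) FAR-groups without $C_{p^\infty}$ subgroups are in $\mathcal{G}^\ast(p)$, with no need to compare $\hat N$ with $\hat N_p$; the solvable case then follows by induction on derived length via Serre's spectral-sequence lemma (Lemma 2.5) applied to the closure of the last term of the derived series (topologically embedded by Lemma 2.6) and to the finite residual. If you want to salvage your architecture, you should either import the full strength of the paper's Theorem 3.2-type argument to establish the pro-$p'$ kernel claim (and check it under the weaker hypothesis of no $C_{p^\infty}$ subgroups rather than no $C_{p^\infty}$ sections), or replace the bridge by Weigel's lemma.
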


For a nilpotent FAR-group we prove the analogous result below concerning the cohomol-
ogy of its pro-p completion. 

\begin{theorem} Let $G$ be a nilpotent FAR-group. Assume $p$ is a prime such that $G$ fails to contain any subgroups isomorphic to $C_{p^\infty}$. Then the pro-$p$ completion map $G\to \hat{G}_p$ induces an isomorphism
\begin{equation} H^\ast(\hat{G}_p,M)\to H^\ast(G,M)\end{equation}
for any discrete $\hat{G}_p$-module $M$ of finite $p$-power order. 
\end{theorem}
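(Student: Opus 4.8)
The plan is to reduce the statement, through two elementary manoeuvres and a spectral‑sequence induction, to two transparent facts: that $p$‑radicable nilpotent groups have no higher cohomology with finite $p$‑power coefficients, and that abelian FAR‑groups are ``$p$‑good''. Throughout I use, as a lemma proved beforehand, that $H^q(H,N)$ is finite of $p$‑power order whenever $H$ is a solvable FAR‑group and $N$ is finite of $p$‑power order, which keeps all coefficient modules occurring below admissible. The first manoeuvre disposes of the prime‑to‑$p$ torsion: the torsion subgroup $T$ of the nilpotent group $G$ is the direct product of a $p$‑group $T_p$ and a locally finite $p'$‑group $T_{p'}$, both characteristic in $G$; since $T_{p'}$ maps trivially to $\hat G_p$, acts trivially on any $M$ of $p$‑power order, and has no positive‑degree cohomology with such coefficients, the Lyndon--Hochschild--Serre spectral sequence of $1\to T_{p'}\to G\to G/T_{p'}\to 1$ makes inflation $H^\ast(G/T_{p'},M)\to H^\ast(G,M)$ an isomorphism, while clearly $\widehat{(G/T_{p'})}_p=\hat G_p$. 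I may thus replace $G$ by $G/T_{p'}$ and assume $T(G)=T_p$; a Kulikov‑type argument --- a reduced abelian $p$‑group of finite $p$‑rank is finite, followed by induction on the nilpotency class, the absence of $C_{p^\infty}$‑subgroups keeping each central quotient reduced --- then shows that $T_p$ is finite.

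The second manoeuvre passes to the residually‑finite‑$p$ case. Since $G$ is nilpotent and FAR, the lower $p$‑central series $\lambda_1(G)=G$, $\lambda_{n+1}(G)=\lambda_n(G)^{p}\,[\lambda_n(G),G]$, has finite factors and is cofinal among the open subgroups for the pro‑$p$ topology, so $R:=\bigcap_n\lambda_n(G)$ equals $\ker(G\to\hat G_p)$ and $\widehat{(G/R)}_p=\hat G_p$. Here the FAR hypothesis is used substantively: one shows that $R$ is $p$‑radicable, and, as $G$ has been arranged to have neither $p'$‑torsion nor divisible $p$‑torsion, $R$ is moreover torsion‑free. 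Every lower central factor of the $p$‑radicable nilpotent group $R$ is then an abelian $p$‑divisible group, hence has vanishing positive‑degree cohomology with coefficients in $\mathbb{F}_p$ and, by a dévissage, with coefficients in any finite $p$‑module; an induction on the nilpotency class via the Lyndon--Hochschild--Serre spectral sequence gives $H^q(R,M)=0$ for $q>0$. The spectral sequence of $1\to R\to G\to G/R\to 1$ then yields $H^\ast(G/R,M)\xrightarrow{\ \sim\ }H^\ast(G,M)$, so it suffices to prove the theorem for the residually‑finite‑$p$ group $G/R$, which is again nilpotent, FAR, with finite $p$‑torsion.

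For that I would induct on the Hirsch length $h(G)$. If $h(G)=0$ then $G=T_p=\hat G_p$. The abelian case --- which in particular settles $h(G)=1$ --- I treat head‑on: writing $0\to T_p\to A\to F\to 0$ with $F$ torsion‑free abelian of finite rank and $r'=\dim_{\mathbb{F}_p}(F/pF)$, one checks that $\hat F_p\cong\mathbb{Z}_p^{\,r'}$ and that the completion map induces an isomorphism $F/pF\to\hat F_p/p\hat F_p$; since $H_\ast(F,\mathbb{Z})=\Lambda^\ast F$ is torsion‑free this forces an isomorphism on mod‑$p$ homology, and a dévissage in $M$ (which also absorbs the action of $A$ on $M$, which necessarily factors through a finite $p$‑group) upgrades it to an isomorphism on $H^\ast(-,M)$; the finite subgroup $T_p$ is then reinstated through its completion sequence. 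For $h(G)\ge 2$ I would choose an \emph{isolated} normal subgroup $N\trianglelefteq G$ with $0<h(N)<h(G)$, available as a term of a suitably refined upper central series; then $N$ and $G/N$ are nilpotent FAR‑groups without $C_{p^\infty}$‑subgroups ($G/N$ is torsion‑free because $N$ is isolated) of strictly smaller Hirsch length, for which the theorem holds by induction. Granting exactness of
\[
1\longrightarrow\hat N_p\longrightarrow\hat G_p\longrightarrow\widehat{(G/N)}_p\longrightarrow 1 ,
\]
the completion map induces a morphism from the continuous Lyndon--Hochschild--Serre spectral sequence of this sequence to the discrete one of $1\to N\to G\to G/N\to 1$; it is an isomorphism on $E_2$ (the inductive hypothesis for $N$ identifies $H^q(\hat N_p,M)$ with $H^q(N,M)$, and the one for $G/N$ with these finite coefficients does the rest), hence an isomorphism on the abutment $H^\ast(\hat G_p,M)\to H^\ast(G,M)$.

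The principal obstacle is the exactness of the pro‑$p$ completion sequences invoked above (for $T_{p'}$, $T_p$, $R$ and $N$). Right‑exactness is automatic; left‑exactness --- equivalently, the assertion that the pro‑$p$ topology of $G$ induces the full pro‑$p$ topology on the subgroup in question --- is where both hypotheses are really spent. The absence of $C_{p^\infty}$‑subgroups is what forces the finite $p$‑torsion to be separated by finite $p$‑quotients of $G$, so that $\hat N_p\to\hat G_p$ is injective; the FAR condition is what makes the lower $p$‑central quotients finite, so that the inverse limits in play are countable limits of finite $p$‑groups and the relevant $\varprojlim^1$‑terms vanish, and it is also what underlies the $p$‑radicability of $R$. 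I would package this input as a lemma --- for an isolated normal subgroup of a nilpotent FAR‑group, the pro‑$p$ topologies are compatible --- and expect its proof to be the technical heart of the whole argument.
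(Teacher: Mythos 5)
Your architecture is sensible and several of the reductions (stripping the $p'$-torsion via the LHS spectral sequence, observing that the $p$-torsion is finite, killing a $p$-radicable kernel because its higher cohomology with finite $p$-power coefficients vanishes, and handling torsion-free abelian pieces via the universal coefficient theorem) run parallel to what the paper actually does with Lemmas 2.10 and 2.11. But the proof has a genuine gap, and it is exactly the one you flag in your last paragraph and then defer: every spectral-sequence comparison you make requires that the normal subgroup in question ($T_{p'}$, $T_p$, $R$, and above all the isolated $N$ in the Hirsch-length induction) be topologically $p$-embedded in $G$, i.e.\ that $\hat{N}_p\to\hat{G}_p$ be injective with image the kernel of $\hat{G}_p\to\widehat{(G/N)}_p$. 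You write ``Granting exactness of\dots'' and later promise to ``package this input as a lemma'' whose proof you ``expect to be the technical heart of the whole argument.'' That lemma is not routine and is not supplied; the paper's Example at the end of Section~2 (the class-two nilpotent group $\Gamma$ in which the central subgroup $\langle a\rangle\cong C_p$ meets every normal subgroup of $p$-power index) shows that for nilpotent groups the required embedding can genuinely fail, so some finite-rank argument must actually be carried out, not merely invoked. A proof that defers its own hardest step is incomplete.

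It is worth noting how the paper gets around this, because the mechanism is the one idea your proposal is missing: rather than proving topological $p$-embedding directly, Lemma~2.8 \emph{deduces} it from the cohomological statement for the quotient. If $A\leq Z(G)$ and $G/A=Q$ already lies in $\mathcal{G}(p)$, then the surjectivity of $H^2(\hat{Q}_p,A)\to H^2(Q,A)$ lets one kill the extension class of $1\to A\to G\to Q\to 1$ on a normal subgroup of $p$-power index, and the resulting complement yields $A\leq_{t(p)}G$. Interleaving this with the induction on nilpotency class (Proposition~2.9) makes the cohomological isomorphism and the topological embedding bootstrap each other, so no independent structure-theoretic lemma about isolated subgroups is ever needed. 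Two secondary points: your assertion that the $p$-residual $R$ of a nilpotent FAR-group is $p$-radicable is itself a nontrivial claim stated without proof (the paper sidesteps it by working with the finite residual and citing the radicability result [{\bf 3}, 5.3.1]); and your parenthetical that the absence of $C_{p^\infty}$-subgroups keeps central quotients reduced is false as stated ($\mathbb{Z}\leq\mathbb{Z}[1/p]$ gives a $C_{p^\infty}$ section with no $C_{p^\infty}$ subgroup), though this does not damage the finiteness of $T_p$, which only concerns subgroups of $G$ itself.
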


If, in the above theorem, $G$ were only assumed to be solvable rather than nilpotent, then the map (1.1) might not be an isomorphism. Nevertheless, our next theorem shows that, in this case, it is still possible to find a normal subgroup of finite index with the same cohomology as its pro-$p$ completion. 

\begin{theorem}  Let $p$ be a prime. Assume $G$ is a solvable FAR-group without any subgroups isomorphic to $C_{p^\infty}$ . Then $G$ possesses a normal subgroup $N$ of finite index such that the canonical map
\begin{displaymath} H^\ast(\hat{N}_p,M)\to H^\ast(N,M)\end{displaymath}
is an isomorphism for any discrete $\hat{N}_p$-module $M$ of finite $p$-power order. 
\end{theorem}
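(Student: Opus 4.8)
The plan is to deduce the statement from Theorem 1.1 by reducing it to an assertion purely about the pro-$p$ topology of a suitable finite-index subgroup, and then to produce that subgroup using the structure theory of solvable FAR-groups together with a careful choice of finite-index reductions.

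Any finite-index subgroup $N\le G$ is again a solvable FAR-group with no subgroup isomorphic to $C_{p^\infty}$, so Theorem 1.1 already supplies an isomorphism $H^\ast(\hat N,M)\to H^\ast(N,M)$ for every discrete $\hat N_p$-module $M$ of finite $p$-power order, regarded as an $\hat N$-module by inflation along the canonical surjection $\hat N\twoheadrightarrow\hat N_p$ onto the maximal pro-$p$ quotient. Hence it is enough to find $N$ for which the remaining inflation map $H^\ast(\hat N_p,M)\to H^\ast(\hat N,M)$ is an isomorphism for all such $M$. Writing $K_N=\ker(\hat N\twoheadrightarrow\hat N_p)$, and noting that $K_N$ acts trivially on $M$, the Lyndon--Hochschild--Serre spectral sequence of $1\to K_N\to\hat N\to\hat N_p\to 1$ has $E_2^{s,t}=H^s(\hat N_p,H^t(K_N,M))$, and it collapses onto the desired isomorphism (the edge map being exactly the inflation) as soon as $\operatorname{cd}_p(K_N)=0$, i.e.\ as soon as $K_N$ is a pro-$p'$ group; in that case $\hat N=K_N\rtimes\hat N_p$ with $\hat N_p$ a Sylow pro-$p$ subgroup. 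Equivalently --- and this is the form I would actually verify --- it suffices (though perhaps it is not strictly necessary) to produce a finite-index normal subgroup $N\trianglelefteq G$ all of whose finite quotients are $p$-nilpotent, i.e.\ admit a normal $p$-complement; the compatible inverse system formed by those $p$-complements then assembles into the required pro-$p'$ subgroup $K_N$.

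To construct $N$ I would start from a finite normal series $1=G_0\trianglelefteq\cdots\trianglelefteq G_n=G$ with abelian FAR factors, which the structure theory of solvable groups of finite rank provides, together with the fact that the periodic part of $G$ sits in a locally finite normal subgroup whose $p$-primary part is Chernikov --- hence finite, since $G$ contains no $C_{p^\infty}$ --- and whose $p'$-primary part is locally finite of order prime to $p$, and therefore cohomologically and pro-$p$-theoretically invisible. Refining this series and passing to a finite-index normal subgroup where necessary, I expect to arrange that the resulting $N$ admits a normal series each of whose factors is (a) torsion-free abelian of finite rank, (b) a finite $p$-group lying inside the Fitting subgroup, or (c) locally finite of order prime to $p$. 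The essential adjustment is to make each ``acting'' layer act trivially modulo $p$ on the $p$-integral data below it: the automorphism groups that occur are subgroups of groups of the shape $GL_m(\mathbb{Z}[S^{-1}])$ or automorphism groups of finite $p$-groups, and their reductions modulo $p$ are finite, so a subgroup of finite index acts trivially modulo $p$. Carrying this out simultaneously for the finitely many layers yields an $N$ in which, in every finite quotient, each $p'$-element centralizes the image of the $p$-part; splitting off the $p'$-Hall subgroup layer by layer (using that modulo $p^k$ the $p$-part acts through a finite $p$-group) then shows that every finite quotient of $N$ is $p$-nilpotent. Under the stronger hypothesis that $G$ has no $C_{p^\infty}$-section one can moreover arrange the series so that the pro-$p$ topology of $N$ induces the full pro-$p$ topology on each of its terms, which is what gives the extra properties referred to in the abstract.

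The main obstacle is the bookkeeping in that last step: a finite-index reduction chosen to tame one layer may disturb the layers above it, so one must check that finitely many such reductions can be performed consistently, and one must control the interaction of the finite $p$-torsion of $G$ with the $p'$-parts of the finite quotients --- for instance, after passing to finite index the $p$-torsion should be centralized and genuinely absorbed into the Fitting subgroup. This is precisely where finiteness of rank (to bound the length of the series and the finite images of the automorphism groups involved) and the absence of $C_{p^\infty}$ (to keep the $p$-torsion finite) are used in an essential way. A more routine but somewhat delicate point is verifying that the chosen abelian-factor series of $G$ can be refined to one with torsion-free, finite-$p$, or $p'$-torsion factors while keeping every term normal in the ambient finite-index subgroup.
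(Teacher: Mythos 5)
Your reduction is correct but it is not the paper's route. Since any $N\leq_f G$ is again a solvable FAR-group with no $C_{p^\infty}$-subgroups, Theorem 1.1 (proved as Theorem 2.14 before this statement, so there is no circularity) gives $H^\ast(\hat{N},M)\cong H^\ast(N,M)$; and if $K_N=\ker(\hat{N}\to\hat{N}_p)$ is pro-$p'$, then $\operatorname{cd}_p K_N=0$ collapses the LHS spectral sequence of $1\to K_N\to\hat{N}\to\hat{N}_p\to 1$ onto the inflation isomorphism, while $K_N$ being pro-$p'$ is indeed equivalent to every finite quotient of $N$ being $p$-nilpotent. The paper does none of this: it never invokes Theorem 1.1 or $\hat{N}$ here. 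It argues by induction on derived length within the class $\mathcal{G}(p)$ itself, first splitting off the radicable finite residual, then taking $A$ to be the profinite closure of the last derived term and setting $N=C_M(A/A^p)$; the key point is not any $p$-nilpotency of finite quotients but rather that $A\leq_{t(p)}N$, which is extracted cohomologically (each $N/A^{p^i}$ lies in $\mathcal{G}(p)$ by Proposition 2.9, and Lemma 2.8 converts this into topological $p$-embeddedness of the central layers), after which Lemma 2.7 closes the induction. The paper's method thus entirely sidesteps the structural statement your proof depends on.

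The gap is that this structural statement --- the existence of $N\unlhd_f G$ all of whose finite quotients are $p$-nilpotent --- carries the full weight of your argument and is only sketched. I believe it is true, and your congruence-subgroup device (centralize $A/pA$, which is finite exactly because of finite abelian section rank) is the right mechanism; it is literally the paper's choice $N=C_M(A/A^p)$. But turning the sketch into a proof requires at least: (a) a strengthened induction hypothesis, namely that \emph{every finite-index subgroup} of the inductively obtained $M/A$ has all finite quotients $p$-nilpotent, since $N/A$ is merely of finite index in $M/A$ and finite quotients do not pass down to finite-index subgroups (the paper's own statement of Theorem 3.1 is phrased about all finite-index subgroups of $N$ for precisely this reason); (b) the verification that in a finite quotient $F$ of $N$, the $p'$-Hall subgroups supplied by Schur--Zassenhaus above the image $\bar{A}_p$ actually centralize $\bar{A}_p$ --- this does follow, because every finite $p$-quotient of $A$ is a quotient of some $A/p^kA$ and a group acting trivially on $P/\Phi(P)$ acts on the finite $p$-group $P$ through a $p$-group, but it must be said; and (c) the reduction modulo the radicable finite residual. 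Be aware that you are also proving a cousin of Theorem 3.2(ii), which the paper only obtains under the stronger hypothesis $p\notin{\rm spec}(G)$; your version does appear to survive with only ``no $C_{p^\infty}$-subgroups'' (e.g.\ $\mathbb{Z}[1/p]\rtimes\mathbb{Z}$ has all finite quotients $p$-nilpotent because $\mathbb{Z}[1/p]$ has no nontrivial finite $p$-quotients), but that is a fact to be checked, not assumed. One smaller error: the periodic elements of a solvable FAR-group need not lie in a locally finite normal subgroup (the infinite dihedral group already fails this), so the torsion must be handled inside the abelian layers rather than globally as your second paragraph suggests.
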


The above theorem is proved at the beginning of Section 3 of the paper.
In the remainder of that section, we examine more closely the phenomenon exemplified by this theorem, namely, the presence in a solvable FAR-group of subgroups of finite index that are blessed with rich, orderly pro-$p$ topologies. Our final result is the following elaboration of Theorem~1.3 for the case where $G$ lacks any $C_{p^\infty}$-sections. 

\begin{theorem} Assume $G$ is a solvable FAR-group. Let $p$ be a prime such that $G$ does not have any sections isomorphic to $C_{p^\infty}$. Then $G$ contains a normal subgroup $N$ of finite index with the following three properties.
\vspace{5pt}

(i) For any section $S$ of $N$, the canonical map
\begin{displaymath} H^\ast(\hat{S}_p,M)\to H^\ast(S,M)\end{displaymath}
\indent is an isomorphism whenever $M$ is a discrete $\hat{S}_p$-module $M$ of finite $p$-power order. 
\vspace{5pt}

(ii) Each section of $N$ is an extension of a $p'$-group by a residually $p$-finite group.
\vspace{5pt}

(iii) For any $H\leq N$, the topology on $H$ induced by the pro-$p$ topology on $N$ coincides 
\indent with the full pro-$p$ topology on $H$. 
\end{theorem}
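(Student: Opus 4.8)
The plan is to reduce Theorem~1.4 to a single structural assertion about $G$ and then to extract the three properties from it, the first two essentially formally and the third by an induction along a normal series. Throughout one uses that the class of solvable FAR-groups without $C_{p^\infty}$-sections is closed under passage to sections (clear) and to finite extensions (since $C_{p^\infty}$ has no proper subgroup of finite index, a $C_{p^\infty}$-section of a finite extension of $H$ already lives inside $H$); hence every section of the subgroup $N$ that we produce automatically belongs to this class, and the task is to build enough rigidity into $N$ that this membership upgrades to the cohomological and topological conclusions.

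The structural assertion I would aim for is: $G$ has a normal subgroup $N$ of finite index in which the torsion is controlled---a normal $p'$-subgroup $N_1$ together with a finite (essentially central) $p$-subgroup---such that $N/N_1$ possesses a finite normal series
\begin{displaymath}N_1/N_1 = \overline{N}_1 \trianglelefteq \overline{N}_2 \trianglelefteq \cdots \trianglelefteq \overline{N}_k = N/N_1\end{displaymath}
whose factors $A_i := \overline{N}_{i+1}/\overline{N}_i$ are abelian FAR-groups with no $C_{p^\infty}$-section (hence with finite $p$-torsion and with $\widehat{(A_i)}_p$ a finitely generated $\mathbb{Z}_p$-module), on each of which $N$ acts through the kernel of the reduction $\mathrm{Aut}(A_i)\to GL_{n_i}(\mathbb Z/p)$ (respectively $GL_{n_i}(\mathbb Z/4)$ when $p=2$), $n_i$ being the $p$-rank of $\widehat{(A_i)}_p$ and $\mathrm{Aut}(A_i)$ being identified with a subgroup of $GL_{n_i}(\mathbb Z_p)$ via $\widehat{(A_i)}_p$. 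To build such an $N$ I would (1)~invoke the structure theory of soluble FAR-groups (Robinson, Lennox--Robinson) to pass to a finite-index normal subgroup with a finite normal series whose factors are abelian of finite rank, using the hypothesis ``no $C_{p^\infty}$-section'' to force each such factor to be residually $p$-finite---otherwise $\bigcap_j p^jA$ would be a nontrivial $p$-divisible section of finite rank, hence would produce a $C_{p^\infty}$-section of $G$---and, after a further finite-index reduction, to collect the $p'$-torsion into a normal subgroup at the bottom and make the $p$-torsion a finite central subgroup; (2)~observe that each action $N/\overline{N}_{i+1}\to GL_{n_i}(\mathbb Z_p)$ meets the open, hence finite-index, principal congruence subgroup in a finite-index subgroup, and replace $N$ by the normal core in $G$ of the intersection of the finitely many finite-index subgroups so obtained; (3)~check that these successive reductions are compatible, a passage to a subgroup preserving the congruence property of the factors already treated (a subgroup of a congruence subgroup is again one) as well as the residual-$p$-finiteness of the abelian factors. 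I expect step~(3)---coordinating all the reductions so that the series, the abelian FAR-no-$C_{p^\infty}$ factors with finitely generated $\mathbb{Z}_p$-completions, and the simultaneous congruence conditions all survive---to be the main obstacle, and it is also the one place where ``no $C_{p^\infty}$-\emph{section}'' (not merely ``no $C_{p^\infty}$-subgroup'') is genuinely used.

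Granting this, property (ii) for $N$ itself is immediate: the congruence condition forces every $p'$-torsion element of $N$ to act trivially on each $A_i$ (a finite subgroup of a pro-$p$ group, such as the relevant congruence subgroup of $GL_{n_i}(\mathbb Z_p)$, is a $p$-group), so $N_1$ is normal, and $N/N_1$, assembled from residually-$p$-finite abelian FAR-groups by congruence-structured extensions, is residually $p$-finite. For an arbitrary section $S = H/L$ of $N$ one intersects the series of $N$ with $H$ and projects to $S$, obtaining a series of $S$ with abelian FAR-no-$C_{p^\infty}$ factors on which $S$ again acts congruently modulo $p$; analysing which elements of $S$ lie in its pro-$p$ residual---this is where some care with the $p'$-torsion of the factors is needed---shows that this residual is a $p'$-group $Q_S$, so $1\to Q_S\to S\to S/Q_S\to 1$ exhibits (ii) for $S$ with $S/Q_S$ residually $p$-finite. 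Property (i) for $S$ then follows from the Lyndon--Hochschild--Serre spectral sequence of this extension: for a discrete $\hat S_p$-module $M$ of finite $p$-power order the $p'$-group $Q_S$ acts trivially on $M$ and $H^{>0}(Q_S,M)=0$, whence $H^\ast(S,M)\cong H^\ast(S/Q_S,M)$ and $\hat S_p=\widehat{(S/Q_S)}_p$; and for the congruence-structured group $S/Q_S$ one shows directly that $H^\ast(\widehat{(S/Q_S)}_p,M)\to H^\ast(S/Q_S,M)$ is an isomorphism by comparing the spectral sequences of its series and of the image series of its pro-$p$ completion---the congruence conditions being precisely what make the completion functor exact along this series---with base case a residually-$p$-finite abelian FAR-group, where the comparison is the abelian case of Theorem~1.2. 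Finally, (iii) is proved by induction along the series of $N$ from two facts, each inherited by subgroups: a residually-$p$-finite abelian FAR-group induces its full pro-$p$ topology on every subgroup (its $p$-adic completion is a finitely generated $\mathbb{Z}_p$-module, in which subgroups are $p$-adically detected), and if $1\to A\to E\to Q\to 1$ with $A$ either a $p'$-group or an abelian group acted on by $E$ through a congruence subgroup, and if $A$ (with its full pro-$p$ topology) and $Q$ each have the property that the ambient pro-$p$ topology induces the full one on every subgroup, then so does $E$; since any $H\le N$ inherits a series of the same type, this gives (iii) for $N$.
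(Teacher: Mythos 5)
Your construction is, perhaps surprisingly, close in spirit to the paper's: the subgroup actually taken there is $N=C_M(A/A^p)$ with $A$ the last derived term, i.e.\ exactly a ``congruence mod $p$'' condition on the bottom abelian factor, imposed one derived length at a time by induction rather than simultaneously along a whole normal series. The formal reductions you describe (sections inherit the series, splitting off the $p'$-residual via LHS, the diagram chase for (ii)) are fine. The genuine gap is the claim on which everything else rests: that ``the congruence conditions are precisely what make the completion functor exact along this series,'' together with its twin for (iii), that a congruence action of $E$ on $A$ lets homogeneity pass to the extension. Exactness of pro-$p$ completion here means precisely that $A$ is topologically $p$-embedded in $E$, i.e.\ that $\hat{A}_p\to\hat{E}_p$ is injective, and the congruence condition alone does not give this. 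The Example at the end of Section 2 of the paper is a \emph{central} extension $1\to C_p\to\Gamma\to G\to 1$ of a class-2 nilpotent group --- so the action is as congruent as it could possibly be --- in which every normal subgroup of $p$-power index contains the generator of $C_p$; hence $\hat{(C_p)}_p\to\hat{\Gamma}_p$ is not injective and the completed sequence is not exact. What rescues the finite-rank case is a cohomological mechanism, not the congruence condition: one uses the surjectivity of $H^2(\hat{Q}_p,A)\to H^2(Q,A)$ --- a fragment of property (i) for the quotient --- to split the extension over a subgroup of $p$-power index, then intersects the finitely many conjugates of the complement using the $\mathcal{FS}$ property (Lemma 2.8), and finally bootstraps along the tower $N/A^{p^i}$ to pass from finite $p$-groups $A$ to arbitrary abelian $A$ (proof of Theorem 3.1). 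Your proposal supplies no substitute for this step, and since the embeddings are needed to prove (i) while they are themselves obtained from (i) for the quotients, the circularity must be broken deliberately; your modular order ``structure, then (ii), then (i), then (iii)'' does not break it.

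A secondary problem is the one you flag yourself: you never carry out step (3), the simultaneous arrangement of a single normal series all of whose factors satisfy the congruence condition at once, and the later steps quietly assume it. This difficulty is avoidable rather than essential --- the paper's induction on derived length imposes the congruence condition only on the bottom factor at each stage and lets the inductive hypothesis (applied to $G/A$) supply the rest, with Lemmas 3.4--3.6 transporting homogeneity and the $p'$-residual property up the extension once the topological embedding of $A$ has been secured. Two smaller inaccuracies: $\mathrm{Aut}(A_i)$ does not embed in $GL_{n_i}(\mathbb{Z}_p)$ when $A_i$ has infinite $p'$-torsion (only the image of the action on $\hat{(A_i)}_p$ does), and your reduction of (i) to $S/Q_S$ needs the vanishing of $H^{>0}(Q_S,M)$ for a possibly infinite torsion $p'$-group $Q_S$, which is true but deserves a line of justification.
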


Any nilpotent FAR-group without  any $C_{p^\infty}$-sections satisfies properties (i),  (ii), and (iii). Hence Theorem 1.4 asserts the existence of a subgroup of finite index that, although not necessarily nilpotent, exhibits three salient features of a nilpotent group with regard to its pro-$p$ topology. Like Theorem 1.3, this result was inspired by the following theorem of J.~Lennox  and D. Robinson [{\bf 3}, 5.3.9]; indeed our choice of the subgroup $N$ coincides with theirs for the case of a residually finite minimax group.  

\begin{theorem} {\rm (Lennox, Robinson)} Assume $G$ is a residually finite solvable minimax group. If $p$ is a prime such that $G$ does not contain any sections isomorphic to $C_{p^\infty}$, then $G$ contains a normal subgroup $N$ of finite index that is residually $p$-finite.
\end{theorem}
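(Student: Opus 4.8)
The plan is to derive Theorem~1.5 directly from Theorem~1.4, which is available since a solvable minimax group is in particular a solvable FAR-group and, by hypothesis, has no sections isomorphic to $C_{p^\infty}$. Applying Theorem~1.4, we obtain a normal subgroup $N$ of finite index in $G$ enjoying properties (i)--(iii). I claim that a suitable finite-index subgroup of $N$ is already residually $p$-finite; the desired normal subgroup is then recovered by passing to a normal core. Throughout I use the elementary fact that residual $p$-finiteness is inherited by subgroups: if $H\leq E$ and $M\trianglelefteq E$ has $p$-power index, then $H\cap M\trianglelefteq H$ has $p$-power index (it divides $[E:M]$), so any nontrivial element of $H$ can be separated.

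First I apply property (ii) to the section $S=N$ of $N$ itself. This exhibits $N$ as an extension $1\to K\to N\to R\to 1$ in which $K$ is a normal $p'$-subgroup and $R=N/K$ is residually $p$-finite. The key structural point is that $K$ is finite. Indeed, $N$ is a subgroup of the residually finite minimax group $G$, so the torsion subgroup of $N$ is a Chernikov group whose divisible part---a direct sum of quasicyclic groups---must be trivial, since a quasicyclic subgroup is not residually finite and so cannot lie in $G$. Hence the torsion subgroup of $N$ is finite, and since $K$ is periodic (being a $p'$-group) it is contained in this finite torsion subgroup and is therefore finite.

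Because $K$ is finite and $N$ is residually finite, I can separate its finitely many nontrivial elements: for each $1\neq k\in K$ choose a finite-index normal subgroup $M_k\trianglelefteq N$ with $k\notin M_k$, and set $N^\ast=\bigcap_{1\neq k\in K}M_k$. Then $N^\ast$ is normal of finite index in $N$ with $N^\ast\cap K=1$, so the quotient map $N\to R$ restricts to an embedding $N^\ast\cong N^\ast K/K\hookrightarrow R$. As $R$ is residually $p$-finite and residual $p$-finiteness passes to subgroups, $N^\ast$ is residually $p$-finite. Finally, $N^\ast$ has finite index in $G$, so its normal core $\mathrm{Core}_G(N^\ast)=\bigcap_{g\in G}(N^\ast)^g$ is a normal subgroup of finite index in $G$; being a subgroup of $N^\ast$, it is again residually $p$-finite, which is exactly what Theorem~1.5 asserts.

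The genuine content of the argument is packaged inside Theorem~1.4, so the main obstacle here is the structural lemma that the $p'$-radical $K$ is finite: this is where the minimax hypothesis and residual finiteness are both essential, ruling out quasicyclic $p'$-subgroups that would otherwise sit inside every finite-index subgroup and destroy residual $p$-finiteness. Once $K$ is known to be finite, the remaining steps---splitting it off by residual finiteness and descending to a normal core---are routine. For a proof independent of Theorem~1.4 one would instead induct along an abelian normal series, using that a torsion-free abelian minimax group $A$ with no $C_{p^\infty}$ section embeds in some $\mathbb{Z}[1/m]^d$ with $p\nmid m$, so that the characteristic subgroups $p^kA$ have finite $p$-power index and trivial intersection; the crux would then be to pass to a finite-index subgroup acting unipotently on each layer $A/pA$, after which the extension becomes residually $p$-finite.
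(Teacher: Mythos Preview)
The paper does not supply its own proof of Theorem~1.5; it is quoted from Lennox and Robinson [{\bf 3}, 5.3.9] as the inspiration for Theorems~1.3 and~1.4, and the paper remarks that the subgroup $N$ constructed in Theorem~1.4 coincides with Lennox and Robinson's in the residually finite minimax case. Your strategy of recovering Theorem~1.5 from Theorem~1.4 is logically legitimate, since the proof of Theorem~1.4 (carried out as Theorem~3.2) nowhere invokes Theorem~1.5.

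There is, however, a genuine error in your argument that $K$ is finite. You speak of ``the torsion subgroup of $N$'' and place $K$ inside it, but the set of torsion elements of a solvable minimax group need not form a subgroup: in the infinite dihedral group, which is polycyclic, the involutions together with the identity do not close under multiplication. So this step, as written, fails.

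The repair is immediate once you argue about $K$ directly rather than about all torsion elements of $N$. As a $p'$-group, $K$ is periodic; as a subgroup of $G$ it is solvable and minimax; a periodic solvable minimax group satisfies the minimal condition on subgroups and is therefore \v{C}ernikov. Since $K$ also inherits residual finiteness from $G$, it contains no quasicyclic subgroup, so its radicable part is trivial and $K$ is finite. With this correction the remainder of your argument---separating the finitely many nontrivial elements of $K$ by residual finiteness, embedding $N^\ast$ in the residually $p$-finite quotient $N/K$, and passing to the normal core in $G$---goes through. Your closing sketch of a direct inductive proof along an abelian series, passing to a finite-index subgroup acting unipotently on each $A/pA$, is essentially the Lennox--Robinson argument the paper is citing.
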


\noindent For the special case of polycyclic groups, Lennox and Robinson's result was first proved by A.~\u{S}mel'kin \cite{smelkin}. 
\vspace{10pt}

\begin{acknowledgement} {\rm The key observation that led to Theorem 1.1 was that the case for $G$ torsion-free could be reduced to that of a group of rank one. This manner of argument was suggested to the author by Peter Kropholler. It is with much gratitude that the author acknowledges Peter Kropholler's invaluable contribution in providing this crucial idea, as well as in supplying other important insights.
}
\end{acknowledgement}
\vspace{10pt}

\begin{notation} {\rm A {\it section} of a group is a quotient of one of its subgroups. 

If $p$ is a prime, an abelian group is said to have {\it finite $p$-rank} if its $p$-torsion subgroup is a direct sum of finitely many cyclic and quasicyclic groups. 

If $G$ is a solvable FAR-group, then its {\it spectrum}, denoted ${\rm spec}(G)$, is the set of all primes $p$ for which $G$ has a section isomorphic to $C_{p^\infty}$. 

A solvable FAR-group is {\it reduced} if it does not contain any radicable subgroups.

If $G$ is a group, the {\it finite residual} of $G$ is the kernel of the profinite completion map $G\to \hat{G}$. Moreover, for any prime $p$, the {\it $p$-residual} of $G$ is the kernel of the pro-$p$ completion map $G\to \hat{G}_p$. 

 Let $G$ be a group and $p$ a prime. If $H$ is a subgroup of $G$ of finite index, we write $H\leq_f G$ (or $H\unlhd_f G$ in the case that $H$ is also normal). If $H$ is a subgroup of $G$ of finite $p$-power index, we write $H\leq_p G$ (or $H\unlhd_p G$ if $H$ is also normal). 

If $H$ is a subgroup of $G$ such that the topology on $H$ induced by the profinite topology on $G$ coincides with the full profinite topology on $H$, then we say that $H$ is {\it topologically embedded} in $G$ and write $H\leq_t G$. Notice that the following three statements are equivalent:

1. $H\leq_t G$.

2. For every $K\unlhd_f H$, there exists $N\unlhd_f G$ such that $N\cap H\leq K$. 

3. The map $\hat{H}\to \hat{G}$ induced by the inclusion map $H\to G$ is an injection. 

If $H$ is a subgroup of $G$ such that the topology on $H$ induced by the pro-$p$ topology on $G$ coincides with the full pro-$p$ topology on $H$, then we say that $H$ is {\it topologically p-embedded} in $G$ and write $H\leq_{t(p)} G$. As above, we observe that the following three statements are equivalent:

1. $H\leq_{t(p)} G$.

2. For every $K\unlhd_p H$, there exists $N\unlhd_p G$ such that $N\cap H\leq K$. 

3. The map $\hat{H}_p\to \hat{G}_p$ induced by the inclusion map $H\to G$ is an injection. 

If every subgroup of $G$ is topologically $p$-embedded in $G$, then we say that the pro-$p$ topology on $G$ is {\it homogeneous}, or that $G$ is {\it $p$-homogeneous}. 
}
\end{notation}
\vspace{10pt}

\section{The classes $\mathcal{G}(p)$ and $\mathcal{G}^\ast(p)$}
\vspace{10pt}

\indent This article is concerned primarily with the following class of groups.

\begin{definition} {\rm Let $p$ be a prime. Define $\mathcal{G}(p)$ to be the class of all groups $G$ such that, for each discrete $\hat{G}_p$-module $M$ of finite $p$-power order, the pro-$p$ completion map $G\to \hat{G}_p$ induces an isomorphism $H^\ast(\hat{G}_p,M)\to H^\ast(G,M)$.}
\end{definition}

Our main goal in this section is to show that, for any prime $p$, $\mathcal{G}(p)$ contains every nilpotent FAR-group without any subgroups isomorphic to $C_{p^\infty}$. In this endeavor, we will employ the following simplification of the defining property for $\mathcal{G}(p)$. 
 
 \begin{proposition} Let $p$ be a prime. A group $G$ belongs to $\mathcal{G}(p)$ if and only if, for every trivial discrete $\hat{G}_p$-module $M$ of finite $p$-power order, the canonical map $H^\ast(\hat{G}_p,M)\to H^\ast(G,M)$ is an isomorphism.
\end{proposition}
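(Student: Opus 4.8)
The ``only if'' direction is immediate: a trivial discrete $\hat{G}_p$-module of finite $p$-power order is in particular a discrete $\hat{G}_p$-module of finite $p$-power order, so if $G \in \mathcal{G}(p)$ then the canonical map is an isomorphism for such modules. For the converse, the plan is a dimension-shifting argument that reduces an arbitrary coefficient module to the trivial case. So assume the map $H^\ast(\hat{G}_p,M)\to H^\ast(G,M)$ is an isomorphism for every trivial discrete $\hat{G}_p$-module $M$ of finite $p$-power order, and let $M$ be an arbitrary discrete $\hat{G}_p$-module of finite $p$-power order. Since $M$ is finite and the action is continuous, the kernel of $\hat{G}_p\to\mathrm{Aut}(M)$ is open, so the action factors through a finite quotient $Q$ of $\hat{G}_p$; as $\hat{G}_p$ is pro-$p$, the group $Q$ is a finite $p$-group. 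I will induct on $|M|$. When $|M|=1$ there is nothing to prove. When $M\neq 0$, the fact that a finite $p$-group acting on a nonzero finite $p$-group has a nonzero fixed-point subgroup gives $M^Q\neq 0$; choosing a subgroup $A\leq M^Q$ of order $p$ produces a $\hat{G}_p$-submodule $A\cong\mathbb{Z}/p$ on which $\hat{G}_p$ acts trivially.

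Next I would apply the comparison maps to the short exact sequence $0\to A\to M\to M/A\to 0$ of discrete $\hat{G}_p$-modules of finite $p$-power order. Pulling this sequence back along $G\to\hat{G}_p$ gives the corresponding short exact sequence of $G$-modules, and since the comparison maps $H^\ast(\hat{G}_p,-)\to H^\ast(G,-)$ are natural in the module and commute with connecting homomorphisms, we obtain a morphism between the two associated long exact cohomology sequences. By the inductive hypothesis the comparison map is an isomorphism on $H^\ast(-,M/A)$, since $|M/A|<|M|$; and it is an isomorphism on $H^\ast(-,A)$ either by the inductive hypothesis, when $|A|<|M|$, or else, in the remaining case $M=A\cong\mathbb{Z}/p$, directly by the standing hypothesis, because $A$ is then a trivial module. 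The five lemma, applied in each degree, then forces the comparison map to be an isomorphism on $H^\ast(-,M)$, completing the induction and hence the proof.

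The only step needing any care is the naturality used in the second paragraph: that $\{\,H^\ast(\hat{G}_p,-)\to H^\ast(G,-)\,\}$ is a morphism of $\delta$-functors on the category of discrete $\hat{G}_p$-modules of finite $p$-power order, compatible with connecting maps for short exact sequences. This is entirely standard --- it follows from comparing bar resolutions, or from the universal property of the cohomological functors --- so I expect no genuine obstacle here. The real content of the proposition is simply the observation that, for $\hat{G}_p$ pro-$p$, every finite coefficient module of $p$-power order is built by successive extensions from copies of the trivial module $\mathbb{Z}/p$, so that an isomorphism statement verified on trivial coefficients automatically propagates to all of them.
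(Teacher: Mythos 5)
Your proof is correct, but it takes a genuinely different route from the paper. You filter the coefficient module: since $\hat{G}_p$ is pro-$p$ and acts on $M$ through a finite $p$-group, $M^{\hat{G}_p}\neq 0$, so $M$ is built by successive extensions from trivial modules $\mathbb{Z}/p$, and the five lemma applied to the two long exact sequences (using that the comparison maps form a morphism of $\delta$-functors) propagates the isomorphism from trivial coefficients to all coefficients. The paper instead changes the group rather than the module: it sets $N=C_G(M)$, so that $M$ becomes a \emph{trivial} module over $N$, and compares the LHS spectral sequences of $1\to N\to G\to Q\to 1$ and $1\to \hat{N}_p\to \hat{G}_p\to Q\to 1$ (with $Q$ a finite $p$-group). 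Note that the paper's comparison requires the isomorphism $H^j(\hat{N}_p,M)\to H^j(N,M)$ on the $E_2$-pages, i.e.\ the trivial-coefficient condition for the subgroup $N$ rather than for $G$ itself; this is automatic in the paper's applications, where the groups range over classes closed under subgroups of $p$-power index, but it is not literally contained in the hypothesis of the proposition. Your d\'evissage avoids this issue entirely, staying with the group $G$ throughout, and so yields the proposition in full generality at the modest cost of an induction on $|M|$ and a check of naturality of the connecting maps.
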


\begin{proof} If $M$ is a finite discrete $\hat{G}_p$-module and $N=C_G(M)$, then there is an exact sequence $1\longrightarrow N\longrightarrow G\longrightarrow Q\longrightarrow~1$, where $Q$ is a finite $p$-group. This yields an exact sequence $1\longrightarrow \hat{N}_p\longrightarrow \hat{G}_p\longrightarrow Q\longrightarrow~1$ of pro-$p$ groups. Hence the proposition follows immediately from the Lyndon-Hochschild-Serre (LHS) spectral sequences for the two extensions. 
\end{proof}

Closely related to the class $\mathcal{G}(p)$ is the class defined below.  

\begin{definition} {\rm Let $p$ be a prime.  Define $\mathcal{G}^\ast(p)$ to be the class of all groups $G$ such that, for each discrete $\hat{G}$-module $M$ of finite $p$-power order, the profinite completion map $G\to \hat{G}$ induces an isomorphism $H^\ast(\hat{G},M)\to H^\ast(G,M)$.}
\end{definition}

\noindent As a consequence of our theorem on nilpotent groups in $\mathcal{G}(p)$, we will deduce that  $\mathcal{G}^\ast(p)$ contains every solvable FAR-group that does not have any subgroups isomorphic to $C_{p^\infty}$. 

In [{\bf 6}, Exercises 1 \&\  2, pp. 15-16], J-P. Serre provides an enlightening introduction to groups that belong to $\mathcal{G}^\ast(p)$ for all primes $p$. He refers to such groups as ``good groups," the appellation which inspired our designations $\mathcal{G}(p)$ and $\mathcal{G}^\ast(p)$. Using Serre's insights, T.~Weigel [{\bf 8}, Proposition 3.1] establishes the following connection between the classes $\mathcal{G}(p)$ and $\mathcal{G}^\ast(p)$.

\begin{lemma}{\rm (Weigel)} Let $G$ be a group and $p$ a prime. If every subgroup of finite index in $G$ belongs to $\mathcal{G}(p)$, then $G$ is contained in $\mathcal{G}^\ast(p)$. 
\end{lemma}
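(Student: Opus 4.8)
The plan is to reduce the statement to the case of a trivial coefficient module, and then to pass between the cohomology of $\hat{G}$ and that of $G$ by a transfer argument that exploits the hypothesis about subgroups of finite index. For the reduction: given a discrete $\hat{G}$-module $M$ of finite $p$-power order, put $K=C_G(M)$, a subgroup of finite index. There is an extension $1\to K\to G\to Q\to 1$ with $Q$ finite and a corresponding extension $1\to\hat{K}\to\hat{G}\to Q\to 1$, where we use the (standard) fact that a subgroup of finite index is topologically embedded --- its normal core in $G$ furnishes a cofinal system --- so that the closure of $K$ in $\hat{G}$ is canonically $\hat{K}$. Unlike the situation in Proposition~2.1, $Q$ need not be a $p$-group, so there is no immediate reduction by an LHS argument over a finite $p$-group; nevertheless, comparing the two Lyndon--Hochschild--Serre spectral sequences through the map of extensions induced by $G\to\hat{G}$ shows that $H^\ast(\hat{G},M)\to H^\ast(G,M)$ is an isomorphism provided $H^\ast(\hat{K},M)\to H^\ast(K,M)$ is one, with $M$ now regarded as a trivial $K$-module. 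Since each subgroup of finite index in $G$ again has all of its subgroups of finite index in $\mathcal{G}(p)$, running this reduction for $K$ as well, it suffices to establish the following claim: if every subgroup of finite index in a group $G$ lies in $\mathcal{G}(p)$, then, for every trivial discrete module $M$ of finite $p$-power order, the canonical map $\alpha\colon H^\ast(\hat{G},M)\to H^\ast(G,M)$ is an isomorphism.

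Fix such a $G$ and $M$, and recall the identifications $H^\ast(\hat{G},M)=\varinjlim_{N\unlhd_f G}H^\ast(G/N,M)$ and $H^\ast(\hat{G}_p,M)=\varinjlim_{N\unlhd_p G}H^\ast(G/N,M)$, both mapping to $H^\ast(G,M)$ by inflation. Surjectivity of $\alpha$ uses only that $G\in\mathcal{G}(p)$: a class of $H^\ast(G,M)$ is the image of a class of $H^\ast(\hat{G}_p,M)$, hence is inflated from a finite $p$-group quotient of $G$, hence lies in the image of $\alpha$. For injectivity, let $x$ lie in the kernel of $\alpha$ and represent it by $x'\in H^n(G/N,M)$ with $N\unlhd_f G$. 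Let $P\leq_f G$ be the preimage in $G$ of a Sylow $p$-subgroup of $G/N$; then $[G:P]$ is prime to $p$, while $N\unlhd_p P$ since $P/N$ is a finite $p$-group. Inflating $\mathrm{res}^{G/N}_{P/N}(x')$ to $H^n(P,M)$ produces $\mathrm{res}^{G}_{P}(\alpha(x))=0$. Since $P\leq_f G$, we have $P\in\mathcal{G}(p)$, so $H^n(P,M)\cong H^n(\hat{P}_p,M)=\varinjlim_{L\unlhd_p P}H^n(P/L,M)$; as $N\unlhd_p P$, the class $\mathrm{res}^{G/N}_{P/N}(x')$ occurs as one term of this filtered colimit and maps to $0$, so it already becomes $0$ in $H^n(P/L,M)$ for some $L\unlhd_p P$ with $L\leq N$ (replace $L$ by $L\cap N$ if need be). Tracking the compatibility of inflation with restriction, this forces $\mathrm{res}^{\hat{G}}_{\hat{P}}(x)=0$, with $\hat{P}$ identified with the corresponding open subgroup of $\hat{G}$. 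Applying corestriction and the identity $\mathrm{cor}^{\hat{G}}_{\hat{P}}\circ\mathrm{res}^{\hat{G}}_{\hat{P}}=[G:P]\cdot\mathrm{id}$ gives $[G:P]\,x=0$, whence $x=0$ because $[G:P]$ is prime to $p$ and $H^n(\hat{G},M)$ is annihilated by a power of $p$.

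The hard part is the injectivity argument, and its heart is the choice of the auxiliary subgroup $P$. One needs $P$ to meet two conflicting requirements simultaneously: its index in $G$ must be prime to $p$, so that the transfer inverts $[G:P]$ on $p$-torsion cohomology, and $P$ must contain the normal subgroup $N$ with $p$-power index, so that the relevant class lands in the portion of $H^\ast(P,M)$ actually governed by $P\in\mathcal{G}(p)$, namely the colimit over subgroups of $p$-power index. The preimage in $G$ of a Sylow $p$-subgroup of the finite quotient $G/N$ is precisely the subgroup that satisfies both demands, and what remains is the routine --- if slightly intricate --- bookkeeping of inflation, restriction and corestriction maps along the chain $P\leq_f G$, the open inclusion $\hat{P}\hookrightarrow\hat{G}$, and the finite quotients $G/N$, $P/N$, and $P/L$.
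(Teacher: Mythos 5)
The paper offers no proof of this lemma at all: it is imported from Weigel [{\bf 8}, Proposition 3.1] (itself an elaboration of Serre's exercises on good groups), so there is no in-text argument to compare yours against line by line. Your proof is correct, and it is in substance the standard Serre--Weigel argument. The two load-bearing points are exactly the ones you isolate. First, the reduction to trivial coefficients via $K=C_G(M)$ is legitimate because $K\unlhd_f G$ is automatically topologically embedded (its core gives a cofinal system), so $1\to\hat{K}\to\hat{G}\to Q\to 1$ is exact and the two LHS spectral sequences can be compared over the same finite $Q$; and the hypothesis is inherited by $K$ since a finite-index subgroup of $K$ is one of $G$. Second, in the injectivity step the preimage $P$ of a Sylow $p$-subgroup of $G/N$ does double duty: $[G:P]$ is prime to $p$, so $\mathrm{cor}\circ\mathrm{res}=[G:P]$ is invertible on the $p$-primary group $H^n(\hat{G},M)$, while $N\unlhd_p P$ places the restricted class inside the colimit $\varinjlim_{L\unlhd_p P}H^n(P/L,M)=H^n(\hat{P}_p,M)$, where injectivity of $H^n(\hat{P}_p,M)\to H^n(P,M)$ (i.e.\ $P\in\mathcal{G}(p)$) forces it to die at a finite $p$-power level, hence in $H^n(\hat{P},M)$. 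The remaining ingredients --- the colimit descriptions of $H^\ast(\hat{G},M)$ and $H^\ast(\hat{G}_p,M)$ for trivial finite coefficients, the identification of the closure of $P$ in $\hat{G}$ with $\hat{P}$, and corestriction for open subgroups of profinite groups --- are all standard and correctly deployed. Your closing observation is also accurate: surjectivity uses only $G\in\mathcal{G}(p)$ via the factorization $\hat{G}\to\hat{G}_p$, and the full strength of the hypothesis is spent on the family of subgroups $P$ in the injectivity half.
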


In addition to $\mathcal{G}(p)$ and $\mathcal{G}^\ast(p)$ we will refer to two other classes of groups.

\begin{definition}{\rm Let $\mathcal{FS}$ be the class of all groups $G$ such that, for each $n\in \mathbb N$, $G$ has only finitely many subgroups of index $n$.} 
\end{definition}

\begin{definition}{\rm Let $\mathcal{FH}$ be the class of all groups $G$ such that, for any finite $G$-module $M$, $H^n(G,M)$ is finite for all $n\geq 0$.}
\end{definition}

A group $G$ is in the class $\mathcal{FS}$ if and only if the set ${\rm Hom}(G,F)$ is finite for every finite group $F$. From this observation it is easy to see that $\mathcal{FS}$ is closed under group extensions.
Moreover, every solvable FAR-group has a normal series of finite length whose factor groups are either infinite cyclic or torsion abelian FAR-groups. Therefore, since the latter two sorts of groups are both in $\mathcal{FS}$, we can quickly deduce the following proposition. 

\begin{proposition} Every solvable FAR-group belongs to $\mathcal{FS}$.
\end{proposition}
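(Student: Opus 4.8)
The plan is to assemble the proposition from the three ingredients already isolated in the discussion preceding it, and then run a short induction. First I would record that, via the characterization ``$G\in\mathcal{FS}$ if and only if ${\rm Hom}(G,F)$ is finite for every finite group $F$'', the class $\mathcal{FS}$ is closed under group extensions; this is the only structural fact about $\mathcal{FS}$ that the argument requires. Next I would exhibit, for an arbitrary solvable FAR-group $G$, a series $1=G_0\unlhd G_1\unlhd\cdots\unlhd G_\ell=G$ of finite length in which each factor $G_{i+1}/G_i$ is either infinite cyclic or a torsion abelian FAR-group. Granting that both kinds of factor belong to $\mathcal{FS}$, the proposition then follows by induction on $\ell$: the term $G_{\ell-1}$ inherits a series of the same type but shorter length, hence lies in $\mathcal{FS}$ by the inductive hypothesis; $G/G_{\ell-1}$ lies in $\mathcal{FS}$ because it is one of the two prescribed factor types; and extension-closure gives $G\in\mathcal{FS}$.

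To build the series I would begin with the derived series $G=G^{(0)}\unrhd G^{(1)}\unrhd\cdots\unrhd G^{(d)}=1$, which has finite length since $G$ is solvable and whose factors are abelian FAR-groups (being sections of $G$). I would then refine each factor $V=G^{(i)}/G^{(i+1)}$: its torsion subgroup $T(V)$ is a torsion abelian FAR-group, $V/T(V)$ is torsion-free abelian of some finite rank $r$, and choosing a free abelian subgroup $W/T(V)$ of rank $r$ inside $V/T(V)$ gives a torsion abelian FAR quotient $V/W$ together with $W/T(V)\cong\mathbb{Z}^r$, which refines through $r$ infinite cyclic steps. Each newly inserted subgroup is normal in the next, and since there are finitely many abelian FAR factors, each of finite torsion-free rank, the resulting series is finite. (One could equally well skip this refinement by checking directly that every abelian FAR-group lies in $\mathcal{FS}$, using that $A/nA$ is finite for such $A$; I mention the decomposition into cyclic and torsion pieces only because it makes the base cases maximally elementary.)

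For the base cases: $\mathbb{Z}\in\mathcal{FS}$ is immediate, since $n\mathbb{Z}$ is its unique subgroup of index $n$ (equivalently ${\rm Hom}(\mathbb{Z},F)\cong F$). For a torsion abelian FAR-group $A$ and a finite group $F$, any homomorphism $A\to F$ has abelian image, which embeds in a product of finitely many cyclic $p$-groups; so it suffices that each ${\rm Hom}(A,\mathbb{Z}/p^{k})$ be finite. Such a homomorphism annihilates $p^{k}A$, hence factors through $A/p^{k}A$; and writing $A=\bigoplus_{q}A_{q}$ for the primary decomposition, one has $p^{k}A_{q}=A_{q}$ for $q\neq p$, so $A/p^{k}A=A_{p}/p^{k}A_{p}$. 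Since $A$ has finite $p$-rank, $A_{p}$ is a direct sum of finitely many cyclic and quasicyclic groups, and the quasicyclic summands, being divisible, vanish modulo $p^{k}$; thus $A/p^{k}A$ is finite, completing the base case.

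I do not expect a genuine obstacle: the mathematical substance has been front-loaded into the preliminary remarks, and what remains is the induction above. If a single delicate point has to be named, it is the assertion that $\mathcal{FS}$ is extension-closed — the quick proof reduces, after quotienting a suitable characteristic finite-index subgroup out of the normal subgroup, to showing that a finite-by-$\mathcal{FS}$ group lies in $\mathcal{FS}$, which follows by passing to the finite-index centralizer of the finite normal subgroup. Since the text has already granted this as an ``easy'' consequence of the ${\rm Hom}$-characterization, I would simply invoke it rather than reprove it.
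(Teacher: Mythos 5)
Your proposal is correct and follows exactly the route the paper intends: extension-closure of $\mathcal{FS}$ via the $\mathrm{Hom}(G,F)$ characterization, plus a finite subnormal series with infinite cyclic or torsion abelian FAR factors obtained by refining the derived series. You have merely written out the base cases and the refinement that the paper leaves as a quick deduction.
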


From the LHS spectral sequence it is apparent that $\mathcal{FH}$, too, is closed under group extensions. As a result, using the same type of normal series as above, we can make the following assertion.

\begin{proposition}{\rm (Robinson [{\bf 5}, Lemma 4.4])} Every solvable FAR-group belongs to $\mathcal{FH}$.
\end{proposition}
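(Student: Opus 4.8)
The plan is to reduce the statement to the two kinds of factor group that occur in a finite normal series of a solvable FAR-group and to dispatch each of them directly, the only real tool being closure of $\mathcal{FH}$ under extensions. First I would record that closure precisely: given an extension $1\to N\to G\to Q\to 1$ with $N,Q\in\mathcal{FH}$ and a finite $G$-module $M$, restriction makes $M$ a finite $N$-module, so each $H^q(N,M)$ is finite and carries its natural structure of finite $Q$-module; hence each $H^p(Q,H^q(N,M))$ is finite, and since the LHS spectral sequence $E_2^{p,q}=H^p(Q,H^q(N,M))\Longrightarrow H^{p+q}(G,M)$ contributes only finitely many, finite terms in each total degree, every $H^n(G,M)$ is finite. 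Invoking the same normal series as was used above for $\mathcal{FS}$ (a finite series whose factors are infinite cyclic or torsion abelian FAR-groups), it then suffices to show that an infinite cyclic group and a torsion abelian FAR-group each lie in $\mathcal{FH}$.

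The infinite cyclic case is immediate: for $G\cong\mathbb{Z}$ and $M$ finite, $H^0(G,M)$ embeds in $M$, $H^1(G,M)$ is a quotient of $M$, and $H^n(G,M)=0$ for $n\geq 2$. For a torsion abelian FAR-group $A$ and a finite $A$-module $M$, the first move is to pass to $C=C_A(M)$: since $\operatorname{Aut}(M)$ is finite, $C$ has finite index in $A$ and is again a torsion abelian FAR-group, with $M$ a trivial $C$-module; by closure under extensions applied to $1\to C\to A\to A/C\to 1$ (the quotient being finite, hence in $\mathcal{FH}$), it is enough to treat $H^\ast(C,M)$ for $M$ a trivial finite module. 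By the universal coefficient theorem over $\mathbb{Z}$, this reduces to showing that $\operatorname{Hom}(H_n(C;\mathbb{Z}),M)$ and $\operatorname{Ext}^1_{\mathbb{Z}}(H_n(C;\mathbb{Z}),M)$ are finite for every $n$, i.e.\ that the integral homology of $C$ is in each degree a torsion abelian group of finite $p$-rank for every prime $p$. Writing $C=\bigoplus_p C_p$ as the sum of its primary components and using that the homology of a $p$-group is $p$-torsion in positive degrees, a Künneth argument kills all cross terms between distinct primes, so $H_n(C;\mathbb{Z})\cong\bigoplus_p H_n(C_p;\mathbb{Z})$ for $n\geq 1$; and since each $C_p$ is an extension of a finite $p$-group by a finite power of $C_{p^\infty}$, a further iterated application of Künneth shows $H_n(C_p;\mathbb{Z})$ is itself a direct sum of finitely many cyclic and quasicyclic $p$-groups. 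As $M$ has only finitely many prime divisors, $\operatorname{Hom}$ and $\operatorname{Ext}^1_{\mathbb{Z}}$ from such a group into $M$ are plainly finite, completing the argument.

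I expect the spectral-sequence bookkeeping and the reductions to be entirely routine; the one point demanding genuine care is the claim that $H_n(C_p;\mathbb{Z})$ has finite $p$-rank. This is exactly where the FAR hypothesis is used—through the finiteness of the number of cyclic and quasicyclic summands of $C_p$—and one must also be attentive to the fact that $C$ may involve infinitely many primes, verifying that they cannot conspire to make $H_n(C;\mathbb{Z})$ large in a fixed degree (they cannot, since in degrees $n\geq 1$ only primes dividing $|M|$ can contribute to $\operatorname{Hom}$ or $\operatorname{Ext}^1_{\mathbb{Z}}$ into $M$).
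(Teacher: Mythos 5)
Your proof is correct and follows essentially the same route as the paper: reduce along the normal series with infinite cyclic or torsion abelian FAR factors using closure of $\mathcal{FH}$ under extensions (via the LHS spectral sequence), then handle the torsion abelian case through the universal coefficient theorem and the fact that the integral homology in each degree is torsion of finite $p$-rank. The only cosmetic difference is that you make the paper's ``three reductions'' explicit (centralizer of $M$, primary decomposition plus K\"unneth) where the paper reduces all the way to $G\cong C_{p^\infty}$ and computes its homology directly as a rank-one $p$-group via direct limits.
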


\begin{proof} The presence of a normal series of finite length whose factor groups are either infinite cyclic or torsion abelian FAR-groups permits a reduction to the case of a torsion abelian FAR-group. Let $G$ be such a group and $M$ a finite $G$-module. Without incurring any significant loss of generality, we may make three further reductions: (1) the action of $G$ on $M$ is trivial; (2) $M$ has $p$-power order for some prime $p$; (3) $G\cong C_{p^\infty}$. Since integral homology commutes with direct limits, $H_n(G)$ is a $p$-group of rank one for $n\geq 1$, making it either a finite cyclic $p$-group or a quasicyclic one. In either case, it follows from the universal coefficient theorem that $H^n(G,M)$ is finite for $n\geq 1$. 
\end{proof}

\begin{remark}{\rm Robinson's  proof of the above proposition differs slightly from ours and reveals more about the cohomology of quasicyclic groups.}
\end{remark}

The relevance of  $\mathcal{FH}$ to the study of the class $\mathcal{G}^\ast (p)$ is made clear by the following lemma, gleaned from  [{\bf 6}, Exercise 2, p. 16]. 

\begin{lemma}{\rm (Serre)}  Let $1\longrightarrow N\longrightarrow G\longrightarrow Q\longrightarrow 1$ be a group extension in which $N\leq_t G$. Suppose, further, that $N$ belongs to $\mathcal{FH}$. If both $N$and $Q$ are in $\mathcal{G}^\ast(p)$, then $G$ is contained in $\mathcal{G}^\ast(p)$. 
\end{lemma}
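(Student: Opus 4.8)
The plan is to compare the Lyndon--Hochschild--Serre (LHS) spectral sequence of the extension $1\to N\to G\to Q\to 1$ with that of the induced extension of profinite completions, the comparison being furnished by the completion maps. The first ingredient is that the hypothesis $N\leq_t G$ makes $1\to\hat N\to\hat G\to\hat Q\to 1$ a short exact sequence of profinite groups: the continuous surjection $\hat G\to\hat Q$ has kernel the closure $\overline N$ of the image of $N$, the quotient $\hat G/\overline N$ is the profinite completion of $G/N=Q$, and $N\leq_t G$ says precisely that $\hat N\to\hat G$ is injective, with image $\overline N$. Consequently, for any discrete $\hat G$-module $M$ of finite $p$-power order we have the LHS spectral sequence of profinite groups $\mathcal{E}_2^{s,t}=H^s(\hat Q,H^t(\hat N,M))\Rightarrow H^{s+t}(\hat G,M)$ in continuous cohomology, and the commutative ladder formed by the completion maps $N\to\hat N$, $G\to\hat G$, $Q\to\hat Q$ induces a morphism from $\mathcal{E}$ to the ordinary LHS spectral sequence $E_2^{s,t}=H^s(Q,H^t(N,M))\Rightarrow H^{s+t}(G,M)$ of the discrete extension.

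The heart of the argument is that this morphism is an isomorphism on $E_2$-pages. Fix $t\geq 0$. Restricted to $\hat N$, the module $M$ is a discrete $\hat N$-module of finite $p$-power order, so $N\in\mathcal{G}^\ast(p)$ gives a comparison isomorphism $H^t(\hat N,M)\to H^t(N,M)$; by naturality of the completion maps it is compatible with the conjugation-and-coefficient actions, and since $N$ acts trivially on both sides it is an isomorphism of $\hat Q$-modules. Because $N\in\mathcal{FH}$, the group $H^t(N,M)$ is finite, and since it is annihilated by $|M|$ it has $p$-power order; its $\hat Q$-action, inherited from the continuous conjugation action of $\hat G$ on $\hat N$, is continuous, so $H^t(\hat N,M)\cong H^t(N,M)$ is a finite discrete $\hat Q$-module of finite $p$-power order. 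Now $Q\in\mathcal{G}^\ast(p)$ applied to this coefficient module, composed with the coefficient isomorphism, yields $\mathcal{E}_2^{s,t}=H^s(\hat Q,H^t(\hat N,M))\to H^s(Q,H^t(\hat N,M))\to H^s(Q,H^t(N,M))=E_2^{s,t}$, an isomorphism for all $s,t\geq 0$.

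Finally, both spectral sequences are concentrated in the first quadrant and hence converge with finite filtrations in each total degree, so an isomorphism on $E_2$-terms propagates to one on $E_\infty$-terms and thence to the abutments, giving that $H^n(\hat G,M)\to H^n(G,M)$ is an isomorphism for all $n$; as $M$ was an arbitrary discrete $\hat G$-module of finite $p$-power order, $G\in\mathcal{G}^\ast(p)$. I expect the step requiring the most care to be the first one: rigorously deriving the exactness of $1\to\hat N\to\hat G\to\hat Q\to 1$ from $N\leq_t G$, checking that the completion maps really do yield a morphism from the profinite LHS spectral sequence to the discrete one (which ultimately rests on comparing continuous cochains on the profinite groups with ordinary cochains), and verifying that each coefficient module $H^t(\hat N,M)$ is genuinely a finite discrete $\hat Q$-module, so that the hypotheses $N\in\mathcal{G}^\ast(p)$, $Q\in\mathcal{G}^\ast(p)$, and $N\in\mathcal{FH}$ can all be brought to bear.
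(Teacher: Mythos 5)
Your proposal is correct and follows exactly the route the paper takes: its entire proof is the one-line assertion that ``a comparison of the LHS spectral sequences for $1\to N\to G\to Q\to 1$ and $1\to \hat{N}\to \hat{G}\to \hat{Q}\to 1$ yields the conclusion,'' and you have supplied precisely the details that assertion suppresses (exactness of the completed sequence via $N\leq_t G$, finiteness of the coefficient modules via $\mathcal{FH}$, and the $E_2$-isomorphism from the two $\mathcal{G}^\ast(p)$ hypotheses).
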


\begin{proof} A comparison of the LHS spectral sequences for $1\longrightarrow N\longrightarrow G\longrightarrow Q\longrightarrow 1$ and $1\longrightarrow \hat{N}\longrightarrow \hat{G}\longrightarrow \hat{Q}\longrightarrow 1$ yields the conclusion.
\end{proof}

When applying the above lemma it is essential that the normal subgroup be topologically embedded in the group. One situation where this holds in a solvable FAR-group is described below. 

\begin{lemma} Let $G$ be a solvable FAR-group and $N\unlhd G$. If $N$ is closed in the profinite topology on $G$, then $N\leq_t G$. 
\end{lemma}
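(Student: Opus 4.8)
The plan is to verify the second of the three equivalent conditions listed in the Notation section for $N\leq_t G$: given $K\unlhd_f N$, I must exhibit $M\unlhd_f G$ with $M\cap N\leq K$. The first move is to reduce to the case $K\unlhd G$. Since $N$ is a subgroup of a solvable FAR-group it belongs to $\mathcal{FS}$ (Proposition~2.8), so $N$ has only finitely many subgroups of index $[N:K]$; as $N\unlhd G$, every conjugate $K^{g}$ ($g\in G$) is again a subgroup of $N$ of index $[N:K]$, so $K$ has only finitely many conjugates and $K_{G}:=\bigcap_{g\in G}K^{g}$ is a $G$-invariant subgroup of finite index in $N$. Replacing $K$ by $K_{G}$, I may assume $K\unlhd G$.

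Next I pass to the quotient $\bar G:=G/K$ and put $\bar N:=N/K$. Then $\bar N$ is a \emph{finite} normal subgroup of $\bar G$, and $\bar G/\bar N\cong G/N$. This is where the hypothesis is used: a normal subgroup is closed in the profinite topology exactly when the corresponding quotient is residually finite, so $\bar G/\bar N$ is residually finite. It now suffices to find $\bar M\unlhd_f\bar G$ with $\bar M\cap\bar N=1$, for then the preimage of $\bar M$ in $G$ is a normal subgroup $M$ of finite index with $K\leq M$ and $M\cap N=K$, which is the required containment $M\cap N\leq K$. Moreover, such an $\bar M$ is easy to produce \emph{once we know that $\bar G$ is residually finite}: for each of the finitely many non-identity elements $x$ of $\bar N$ choose $\bar M_{x}\unlhd_f\bar G$ avoiding $x$, and take $\bar M=\bigcap_{x}\bar M_{x}$.

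Thus the whole lemma is reduced to the assertion that a solvable FAR-group with a finite normal subgroup of residually finite quotient is itself residually finite. To prove this, let $R$ be the finite residual of $\bar G$. Because $\bar G/\bar N$ is residually finite, the intersection of the finite-index normal subgroups of $\bar G$ that contain $\bar N$ is $\bar N$, and hence $R\leq\bar N$; in particular $R$ is finite. On the other hand, the finite residual of a solvable FAR-group is radicable (a theorem of Robinson; see \cite{lennox}), and a finite radicable group is trivial. Therefore $R=1$, i.e.\ $\bar G$ is residually finite, which completes the argument.

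The only non-formal ingredient is the last one: the radicability of the finite residual of a solvable FAR-group (equivalently, that a \emph{reduced} solvable FAR-group is residually finite), and this is the step I expect to be the crux. Everything else --- the reduction to $K\unlhd G$ via membership in $\mathcal{FS}$, the translation of ``$N$ closed'' into ``$G/N$ residually finite,'' and the finiteness argument extracting a single $\bar M$ --- is routine profinite-topology bookkeeping.
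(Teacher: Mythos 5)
Your proof is correct and follows essentially the same route as the paper's: both reduce to a $G$-normal subgroup $K'$ of finite index in $N$ by intersecting the finitely many conjugates of $K$ (using membership in $\mathcal{FS}$), and then deduce that $G/K'$ is residually finite from the residual finiteness of $G/N$ together with the Lennox--Robinson structure theory of solvable FAR-groups. The only immaterial difference is that the paper shows $G/K'$ is reduced and cites the corollary that reduced solvable FAR-groups are residually finite, whereas you argue directly that the finite residual of $G/K'$ is finite and radicable, hence trivial.
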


\begin{proof} Let $K\unlhd_f N$. Take $K'$ to be the intersection of all the conjugates of $K$ in $G$. Then $K'\unlhd G$; moreover, since $N$ belongs to $\mathcal{FS}$, $[N:K']$ is finite. As a residually finite group, $G/N$ is reduced. Therefore, $G/K'$, too, is reduced, so that, by [{\bf 3}, Corollary 5.3.2], it is residually finite. Thus there exists $L\unlhd_f G$ such that $K'\leq L$ and $L\cap N\leq K'$. It follows, then, that $N\leq_t G$. 
\end{proof}

Lemma 2.5 has the following pro-$p$ analogue, proved in much the same way. 

\begin{lemma} Assume $p$ is a prime. Let
$1\longrightarrow N\longrightarrow G\longrightarrow Q\longrightarrow 1$
be a group extension in which $N\leq_{t(p)} G$. Suppose, further, that $N$ belongs to $\mathcal{FH}$. If $N$and $Q$ are in $\mathcal{G}(p)$, then $G$ belongs to $\mathcal{G}(p)$. 
\end{lemma}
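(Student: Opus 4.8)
The plan is to follow the proof of Lemma~2.5, substituting the pro-$p$ completion for the profinite completion throughout: I would produce a short exact sequence of pro-$p$ groups mirroring $1\to N\to G\to Q\to 1$, check that the coefficient modules appearing on the $E_2$-pages match up, and then read the conclusion off the comparison of the Lyndon--Hochschild--Serre spectral sequences induced by the pro-$p$ completion maps. The first task is to show that the hypothesis $N\leq_{t(p)}G$ yields an exact sequence $1\to\hat{N}_p\to\hat{G}_p\to\hat{Q}_p\to1$ of pro-$p$ groups. By characterization~3 of topological $p$-embeddedness in the Notation, the map $\hat{N}_p\to\hat{G}_p$ is injective, with image the closure $\overline{N}$ of the image of $N$ in $\hat{G}_p$; and since $\hat{G}_p\to\hat{Q}_p$ is surjective with kernel $\overline{N}$, writing $\hat{G}_p=\varprojlim_{L\unlhd_p G}G/L$ and computing $\hat{G}_p/\overline{N}=\varprojlim_{L\unlhd_p G}G/NL=\varprojlim_{K\unlhd_p G,\,K\supseteq N}G/K\cong\hat{Q}_p$ completes the sequence.

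Next I would examine the coefficient modules. Fix a discrete $\hat{G}_p$-module $M$ of finite $p$-power order and an integer $t\geq0$. Since $N\in\mathcal{FH}$, the group $H^t(N,M)$ is finite, and it has $p$-power order because it is computed from a cochain complex of $p^k$-torsion groups, where $p^kM=0$. As $N\in\mathcal{G}(p)$, the restriction map $H^t(\hat{N}_p,M)\to H^t(N,M)$ is an isomorphism, and by functoriality it is compatible with the conjugation actions of $\hat{G}_p$ and $G$, hence with the induced actions of $\hat{Q}_p$ and $Q$. Because $H^t(\hat{N}_p,M)$ is a finite discrete $\hat{Q}_p$-module, the action of $\hat{Q}_p$ on it has open kernel and so factors through a finite $p$-quotient of $\hat{Q}_p$; transporting the structure along the above isomorphism, $H^t(N,M)$ is a discrete $\hat{Q}_p$-module of finite $p$-power order whose underlying $Q$-module is the natural one.

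Finally, the pro-$p$ completion maps constitute a morphism of extensions from $1\to N\to G\to Q\to1$ to $1\to\hat{N}_p\to\hat{G}_p\to\hat{Q}_p\to1$, and so induce a morphism from the spectral sequence $H^s(\hat{Q}_p,H^t(\hat{N}_p,M))\Rightarrow H^{s+t}(\hat{G}_p,M)$ to the spectral sequence $H^s(Q,H^t(N,M))\Rightarrow H^{s+t}(G,M)$. On $E_2$ the comparison map factors as $H^s(\hat{Q}_p,H^t(\hat{N}_p,M))\to H^s(Q,H^t(\hat{N}_p,M))\to H^s(Q,H^t(N,M))$: the first arrow is an isomorphism because $Q\in\mathcal{G}(p)$ and $H^t(\hat{N}_p,M)$ is a finite discrete $\hat{Q}_p$-module of $p$-power order, and the second because $N\in\mathcal{G}(p)$ makes $H^t(\hat{N}_p,M)\to H^t(N,M)$ an isomorphism of $Q$-modules. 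An isomorphism on $E_2$ forces an isomorphism on the abutments, that is, $H^\ast(\hat{G}_p,M)\to H^\ast(G,M)$ is an isomorphism, so $G\in\mathcal{G}(p)$.

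I expect the main obstacle to be the first step, namely turning $N\leq_{t(p)}G$ into the \emph{left} exactness of $1\to\hat{N}_p\to\hat{G}_p\to\hat{Q}_p\to1$: without it one obtains only $1\to\overline{N}\to\hat{G}_p\to\hat{Q}_p\to1$ with $\overline{N}$ a proper quotient of $\hat{N}_p$, and then neither the coefficient modules $H^t(\hat{N}_p,M)$ nor the hypothesis $N\in\mathcal{G}(p)$ align with the spectral sequence of $\hat{G}_p$. A secondary technical point, dispatched by the hypothesis $N\in\mathcal{FH}$, is that the coefficient modules occurring on the $E_2$-pages must be finite (and $p$-primary) for $Q\in\mathcal{G}(p)$ to be applicable to them.
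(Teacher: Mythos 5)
Your proposal is correct and is essentially the paper's argument: the paper proves this lemma by noting it is the pro-$p$ analogue of Lemma 2.5, i.e., by comparing the LHS spectral sequences of $1\to N\to G\to Q\to 1$ and $1\to \hat{N}_p\to\hat{G}_p\to\hat{Q}_p\to 1$, with $N\leq_{t(p)}G$ supplying the left exactness of the completed sequence and $N\in\mathcal{FH}$ guaranteeing the finiteness of the coefficient modules on the $E_2$-page. Your write-up simply supplies the details the paper leaves implicit.
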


Lemma 2.7 requires that the normal subgroup in question be topologically $p$-embedded in the ambient group. One pair of conditions that implies this property is described below-- similar observations may be found in  \cite{schick}.

\begin{lemma}  Assume $p$ is a prime. Let $G$ be a group and $A\leq Z(G)$. If $G/A$ belongs to $\mathcal{G}(p)$, then $A\leq_{t(p)} G$. 
 \end{lemma}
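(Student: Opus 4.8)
The plan is to verify the second of the three equivalent conditions characterizing $A\leq_{t(p)}G$: that for every $K\unlhd_p A$ there exists $N\unlhd_p G$ with $N\cap A\leq K$. So I fix $K\unlhd_p A$, set $C:=A/K$ (a finite abelian $p$-group) and $Q:=G/A$, and form the central extension $1\longrightarrow C\longrightarrow \bar G\longrightarrow Q\longrightarrow 1$ obtained from $1\longrightarrow A\longrightarrow G\longrightarrow Q\longrightarrow 1$ by pushing out along the surjection $A\twoheadrightarrow C$. This comes with a surjection $\pi\colon G\twoheadrightarrow \bar G$ whose kernel is $K$ and whose restriction to $A$ is the composite $A\twoheadrightarrow C\hookrightarrow \bar G$. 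The task is thereby reduced to producing $\bar N\unlhd_p\bar G$ with $\bar N\cap C=1$: then $N:=\pi^{-1}(\bar N)$ satisfies $G/N\cong \bar G/\bar N$, so $N\unlhd_p G$, and $N\cap A\leq K$, since any $a\in N\cap A$ maps into $\bar N\cap C=1$ and hence lies in $K=\ker\pi$.

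To find $\bar N$ I will invoke the hypothesis $Q\in\mathcal{G}(p)$ with coefficients in the trivial finite module $C$. As a central extension, $\bar G$ is classified by a class $\xi\in H^2(Q,C)$. Since $Q\in\mathcal{G}(p)$, the comparison map $H^2(\hat{Q}_p,C)\to H^2(Q,C)$ is an isomorphism; and because $H^2(\hat{Q}_p,C)$ is the direct limit of the groups $H^2(Q/U,C)$ over $U\unlhd_p Q$, with structure maps given by inflation, $\xi$ is inflated from some finite $p$-quotient: there are $U\unlhd_p Q$ and $\xi_0\in H^2(Q/U,C)$ with $\mathrm{inf}_{Q\to Q/U}(\xi_0)=\xi$. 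Let $1\longrightarrow C\longrightarrow E\longrightarrow Q/U\longrightarrow 1$ be the central extension classified by $\xi_0$; being a central extension of a finite $p$-group by a finite $p$-group, $E$ is a finite $p$-group. Since $\xi=\mathrm{inf}(\xi_0)$, the extension $\bar G$ is equivalent to the pullback of $E\to Q/U$ along $Q\to Q/U$, so there is a surjective homomorphism $\bar G\twoheadrightarrow E$ restricting to the identity on $C$. Setting $\bar N:=\ker(\bar G\to E)$ then works: $\bar G/\bar N\cong E$ is a finite $p$-group, so $\bar N\unlhd_p\bar G$, and $\bar N\cap C=1$ because $\bar G\to E$ is injective on $C$.

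I expect the step needing the most care to be the middle one — extracting from $Q\in\mathcal{G}(p)$ the fact that $\bar G$ is pulled back from a central extension of a finite $p$-quotient of $Q$. This rests on two standard but essential compatibilities: that the isomorphism supplied by membership in $\mathcal{G}(p)$ is precisely the inflation along $Q\to\hat{Q}_p$, so its image in $H^2(Q,C)$ is the union of the images of the inflations from the finite $p$-quotients $Q/U$; and that inflation of a degree-two cohomology class corresponds to pullback of the associated central extension. The surrounding work — the pushout construction, the verification that $\pi$ is surjective with kernel $K$, and the pullback of $\bar N$ along $\pi$ — consists of routine diagram chases.
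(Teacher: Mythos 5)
Your proof is correct and follows essentially the same route as the paper's: both reduce to the case of a finite central $p$-subgroup by passing to the quotient $G/K$ (your pushout $\bar G$ is exactly this quotient, since $K\leq Z(G)$ is normal in $G$), and both then exploit the surjectivity of $H^2(\hat{Q}_p,C)\to H^2(Q,C)$ together with the description of $H^2(\hat{Q}_p,C)$ as a direct limit over the finite $p$-quotients of $Q$. The only divergence is the endgame: where you observe that $\xi$ is inflated from some $Q/U$ and take the kernel of the resulting surjection $\bar G\twoheadrightarrow E$ onto a finite $p$-group, the paper instead notes that $\xi$ restricts trivially to some $R\unlhd_p Q$, splits the corresponding subextension, and intersects the finitely many conjugates of a complement -- your inflation/pullback version is marginally cleaner, as it sidesteps that last counting step.
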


\begin{proof} To begin with, we consider the case where $A$ is a finite $p$-group.  Set $Q=G/A$ and let  $\xi\in H^2(Q,A)$ be the cohomology class of the extension $1\longrightarrow A\longrightarrow G\longrightarrow Q\longrightarrow 1$. The group $H^2(\hat{Q}_p,A)$ is the direct limit of the cohomologies with coefficients in $A$ of all the $p$-finite quotients of $Q$. In view of the surjectivity of the map $H^2(\hat{Q}_p,A)\to H^2(Q,A)$, this means that there exists $R\unlhd_p Q$ such that the image of $\xi$ in $H^2(R,A)$ is trivial. Thus we have $U\unlhd_p G$ containing $A$ such that $U$ splits over $A$. Let $V\leq  U$ such that $U=AV$ and $A\cap V=1$. Then $V\unlhd_p U$. Take $N$ to be the intersection of all the conjugates of $V$ in $G$. Then $N\unlhd G$ and $N\cap A=1$.  Notice that each conjugate of $V$ in $G$ is a normal subgroup of $U$ with finite $p$-power index; also, there are only finitely many such conjugates. These two observations, together, imply that $[U:N]$ is a finite $p$-power, so that $N\unlhd_p G$. It follows, then, that $A\leq_{t(p)} G$. Now we treat the case where $A$ is an arbitrary abelian group.  Let $B\leq_p A$.  By the first case, $A/B\leq_{t(p)} G/B$. Hence there exists $M\unlhd_p G$ with $M\cap A\leq B$.  Consequently, $A\leq_{t(p)} G$. 
\end{proof}

Lemmas 2.7 and 2.8 allow us to conclude that the class $\mathcal{G}(p)$ is closed under the formation of certain central extensions.

\begin{proposition}  Assume $p$ is a prime. Let $1\longrightarrow A\longrightarrow G\longrightarrow Q\longrightarrow 1$ be a central group extension such that $A$ belongs to $\mathcal{FH}$. If $A$ and $Q$ are both contained in $\mathcal{G}(p)$, then $G$ also belongs to $\mathcal{G}(p)$. 
 \end{proposition}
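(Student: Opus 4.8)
The plan is to obtain this proposition as a formal consequence of Lemmas 2.7 and 2.8, which together already encode all the real work. The key point is that a \emph{central} extension is exactly the setting in which Lemma 2.8 applies, and Lemma 2.8 is precisely what is needed to verify the topological $p$-embedding hypothesis of Lemma 2.7.

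Concretely, I would argue as follows. Since the extension $1\longrightarrow A\longrightarrow G\longrightarrow Q\longrightarrow 1$ is central, we have $A\leq Z(G)$; and by hypothesis $Q\cong G/A$ lies in $\mathcal{G}(p)$. These are exactly the hypotheses of Lemma 2.8, so that lemma yields $A\leq_{t(p)} G$. Now apply Lemma 2.7 to the same extension with $N=A$: the four conditions it requires are $A\leq_{t(p)} G$ (just established), $A\in\mathcal{FH}$ (given), $A\in\mathcal{G}(p)$ (given), and $Q\in\mathcal{G}(p)$ (given). Lemma 2.7 then delivers $G\in\mathcal{G}(p)$, which is the assertion.

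Because the statement is essentially a matter of lining up the hypotheses of the two preceding lemmas, there is no genuine obstacle at this stage; the only thing to check is that centrality of the extension is what licenses the appeal to Lemma 2.8 (and hence supplies the topological $p$-embedding that Lemma 2.7 cannot do without). The substantive difficulty lies entirely upstream, in the proof of Lemma 2.8 itself, where the triviality of the extension class in $H^2(\hat{Q}_p,A)$ after restriction to a suitable $p$-finite quotient of $Q$ must be converted into an honest normal subgroup of finite $p$-power index in $G$ meeting $A$ trivially.
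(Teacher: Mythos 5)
Your proof is correct and is exactly the paper's argument: the paper states Proposition 2.9 with the remark that ``Lemmas 2.7 and 2.8 allow us to conclude'' it, i.e., Lemma 2.8 supplies $A\leq_{t(p)} G$ from centrality and $G/A\cong Q\in\mathcal{G}(p)$, and Lemma 2.7 then finishes. Nothing to add.
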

 
With the above closure property, we can establish that every torsion-free nilpotent FAR-group is in $\mathcal{G}(p)$. 
 
  \begin{lemma} If $G$ is a torsion-free nilpotent  FAR-group, then $G$ belongs to the class $\mathcal{G}(p)$ for each prime $p$. 
\end{lemma}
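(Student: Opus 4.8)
The plan is to argue by induction on the Hirsch length $h(G)$, which is finite because $G$ is a solvable FAR-group. If $h(G)=0$ then $G$ is trivial, and if $h(G)=1$ then $G$ is abelian of torsion-free rank $1$, i.e.\ isomorphic to a subgroup of $\mathbb{Q}$; this is the base case, handled by direct computation below. For the inductive step assume $h(G)=n\geq 2$, choose $1\neq z\in Z(G)$, and let $A$ be the isolator of $\langle z\rangle$ in $G$. Since torsion-free nilpotent groups have unique roots, $A$ is a subgroup; it is contained in $Z(G)$ (if $g^m=z^k$ then $(h^{-1}gh)^m=z^k=g^m$ forces $h^{-1}gh=g$ for every $h\in G$), it is torsion-free abelian of rank $1$, and $G/A$ is torsion-free. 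Hence $G/A$ is a torsion-free nilpotent FAR-group with $h(G/A)=n-1$, so $G/A\in\mathcal{G}(p)$ by the inductive hypothesis, while $A$, being a rank-one torsion-free abelian group, is a solvable FAR-group and therefore lies in $\mathcal{FH}$ by Proposition 2.8 and in $\mathcal{G}(p)$ by the base case. Applying Proposition 2.10 to the central extension $1\to A\to G\to G/A\to 1$ then yields $G\in\mathcal{G}(p)$.

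It remains to treat $G\leq\mathbb{Q}$. By Proposition 2.2 it is enough to show that $H^\ast(\hat{G}_p,M)\to H^\ast(G,M)$ is an isomorphism for every trivial module $M$ of finite $p$-power order. Writing $G$ as a directed union of infinite cyclic subgroups $C_1\leq C_2\leq\cdots$, and using that homology commutes with direct limits, we get $H_i(G;\mathbb{Z})=0$ for $i\geq 2$; so the universal coefficient theorem gives $H^i(G,M)=0$ for $i\geq 3$, $H^0(G,M)=M$, $H^1(G,M)=\mathrm{Hom}(G,M)$ and $H^2(G,M)=\mathrm{Ext}^1_{\mathbb{Z}}(G,M)$, and these last two groups are respectively the $\varprojlim$ and the $\varprojlim^1$ of the inverse system of copies of $M$ whose transition maps are multiplication by the indices $[C_{i+1}:C_i]$. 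There are two cases. If $G$ is $p$-divisible then $\hat{G}_p$ is trivial and, because $M$ is $p$-bounded while the partial products of the indices $[C_{i+1}:C_i]$ absorb arbitrarily high powers of $p$, this system is pro-zero; hence $\mathrm{Hom}(G,M)=\mathrm{Ext}^1_{\mathbb{Z}}(G,M)=0$, so $H^\ast(G,M)$ is concentrated in degree $0$ and agrees with $H^\ast(\hat{G}_p,M)$. If $G$ is not $p$-divisible then $G\otimes\mathbb{Z}_p\cong\mathbb{Z}_p$, so $\hat{G}_p\cong\mathbb{Z}_p$; here all but finitely many of the indices $[C_{i+1}:C_i]$ are prime to $p$, the system is eventually made of isomorphisms, $\mathrm{Hom}(G,M)\cong M$ and $\mathrm{Ext}^1_{\mathbb{Z}}(G,M)=0$, and $H^\ast(G,M)$ agrees with the continuous cohomology $H^\ast(\mathbb{Z}_p,M)$, which is $M$ in degrees $0$ and $1$ and vanishes above. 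In either case the comparison isomorphism is the one induced by the completion map: the only degree in which this needs checking is degree $1$, where it follows from the density of the image of $G$ in $\hat{G}_p$ together with the discreteness of $M$.

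I expect the base case to be the main obstacle. The reduction step is a clean application of Proposition 2.10 once one observes, via uniqueness of roots, that the isolator of a central cyclic subgroup is again central with torsion-free quotient; by contrast, the base case requires carrying out the $\varprojlim$/$\varprojlim^1$ bookkeeping for an arbitrary rank-one torsion-free group and then matching the resulting abstract isomorphism with the completion-induced map. A minor point worth making explicit is that a rank-one torsion-free abelian group really is an FAR-group --- its torsion sections are subquotients of $\mathbb{Q}/\mathbb{Z}$ and hence of finite $p$-rank for every prime $p$ --- so that Proposition 2.8 genuinely applies to $A$.
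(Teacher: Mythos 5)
Your proof is correct and follows essentially the same route as the paper: reduce to the case of a rank-one torsion-free abelian group via the central-extension closure property (Proposition 2.9, which you cite as 2.10), then compute the cohomology of such a group using the universal coefficient theorem together with the fact that homology commutes with direct limits. Your reduction is phrased as an induction on Hirsch length via isolators of central elements rather than by citing a central series with rank-one torsion-free factors, and your base case is carried out in more detail (the explicit $\varprojlim$/$\varprojlim^{1}$ bookkeeping and the $p$-divisibility dichotomy, which in fact justifies the vanishing of the ${\rm Ext}$ term more carefully than the paper does), but the skeleton of the argument is identical.
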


\begin{proof} The group $G$ has a central series whose factors are each torsion-free abelian groups of rank one. Thus, in view of Proposition 2.9, it suffices to consider the case where $G$ is torsion-free abelian of rank one. Let $M$ be a 
trivial discrete $\hat{G}_p$-module of finite $p$-power order. The cohomologies of any group and its pro-$p$ completion with trivial $p$-finite coefficients are always isomorphic in dimensions zero and one. Hence we only need to look at cohomology in dimension $n\geq 2$. Because $\hat{G}_p$ is either trivial or isomorphic to $\hat{\mathbb Z}_p$, $H^n(\hat{G}_p,M)=0$. Applying the universal coefficient theorem, we obtain an exact sequence
\begin{displaymath}
 \begin{CD}
0 @>>> {\rm Ext}(H_{n-1}(G), M) @>>> H^n(G,M) @>>> {\rm Hom}(H_n(G), M) @>>>
0.\end{CD} \end{displaymath}
\noindent Since $G$ is a direct limit of infinite cyclic groups and homology commutes with direct limits, $H_{n-1}(G)$ is torsion-free. Hence  
${\rm Ext}(H_{n-1}(G), M)=0$. In addition, we have $H_n(G)=0$, so that $H^n(G,M)=0$. Therefore, $H^\ast(\hat{G}_p,M)\cong H^\ast(G,M)$. 
\end{proof}

Before extending Lemma 2.10 to nilpotent FAR-groups without any $C_{p^\infty}$-subgroups, we need to investigate FAR-groups that are both torsion and abelian. 

\begin{lemma} Let $p$ be a prime. Assume $A$ is a torsion abelian group of finite $p$-rank.
If $A$ fails to contain any subgroups isomorphic to $C_{p^\infty}$, then $A$ belongs to the class $\mathcal{G}(p)$. 
\end{lemma}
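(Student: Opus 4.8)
The plan is to reduce everything to the finiteness of the $p$-primary component of $A$. Write $A=A_p\times B$, where $A_p$ is the $p$-primary component of $A$ and $B$ is the direct sum of the $q$-primary components over all primes $q\neq p$. By the definition of finite $p$-rank, $A_p$ is a direct sum of finitely many cyclic and quasicyclic $p$-groups, and the hypothesis that $A$ contains no subgroup isomorphic to $C_{p^\infty}$ rules out the quasicyclic summands; hence $A_p$ is a finite abelian $p$-group. Since $B$ is $p'$-torsion, it admits no nontrivial finite $p$-quotient, so the finite $p$-quotients of $A$ are exactly those of $A/B\cong A_p$. Thus $\hat{A}_p\cong A_p$, and under this identification the pro-$p$ completion map $A\to\hat{A}_p$ is simply the projection $A\to A/B=A_p$; in particular the map $H^\ast(\hat{A}_p,M)\to H^\ast(A,M)$ to be studied is the inflation map associated with the quotient $A\to A/B$.

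Next I would apply the Lyndon--Hochschild--Serre spectral sequence of the extension $1\to B\to A\to A_p\to 1$ (one may first reduce to trivial coefficient modules via Proposition 2.2, though this is not strictly necessary, since any finite discrete $\hat{A}_p$-module $M$ of $p$-power order is automatically $B$-trivial as an $A$-module). Its $E_2$-page is $E_2^{s,t}=H^s(A_p,H^t(B,M))$. The crux is to show that $H^t(B,M)=0$ for every $t\geq 1$, for then the spectral sequence collapses onto the bottom row and the edge homomorphism, namely the inflation map $H^n(A_p,M)\to H^n(A,M)$, is an isomorphism in every degree $n$ — which is precisely the desired conclusion.

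To establish the vanishing of $H^t(B,M)$ for $t\geq 1$, I would pass through homology. The group $B$ is the direct limit of its finite subgroups, each a finite $p'$-group, and integral homology commutes with direct limits; hence $H_t(B)$ is a $p'$-torsion abelian group for $t\geq 1$, while $H_0(B)=\mathbb{Z}$. The universal coefficient theorem then expresses $H^t(B,M)$ in terms of $\operatorname{Hom}(H_t(B),M)$ and $\operatorname{Ext}^1_{\mathbb{Z}}(H_{t-1}(B),M)$. Since multiplication by $p$ is an automorphism of every $p'$-torsion abelian group, whereas $M$ is annihilated by a power of $p$, both $\operatorname{Hom}$ and $\operatorname{Ext}^1_{\mathbb{Z}}$ from a $p'$-torsion group into $M$ vanish. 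Consequently both contributions are zero for $t\geq 2$, and for $t=1$ one has $\operatorname{Hom}(H_1(B),M)=\operatorname{Hom}(B,M)=0$ and $\operatorname{Ext}^1_{\mathbb{Z}}(H_0(B),M)=\operatorname{Ext}^1_{\mathbb{Z}}(\mathbb{Z},M)=0$. This yields $H^t(B,M)=0$ for all $t\geq 1$ and completes the proof.

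The one genuinely delicate point is this last step: unlike a finite $p'$-group — whose cohomology with $p$-torsion coefficients is killed by the usual order argument — the group $B$ may be infinite, so one cannot work directly with cohomology, which does not commute with direct limits of groups. Routing the vanishing through homology and the universal coefficient theorem circumvents this, at the modest cost of the bookkeeping observation that $\operatorname{Hom}$ and $\operatorname{Ext}^1_{\mathbb{Z}}$ from a $p'$-torsion group to a finite $p$-group are trivial. One should also check that the isomorphism produced by the collapsed spectral sequence really is the canonical map induced by $A\to\hat{A}_p$, but this is immediate from its identification with the inflation map recorded in the first paragraph.
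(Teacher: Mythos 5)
Your proof is correct and follows essentially the same route as the paper: decompose $A=A_p\times A_{p'}$, note that the no-$C_{p^\infty}$ hypothesis forces $A_p$ to be finite so that $\hat{A}_p=A_p$, and then establish $H^\ast(A,M)\cong H^\ast(A_p,M)$. The paper simply asserts this last isomorphism, whereas you supply the justification (LHS spectral sequence plus the vanishing of $H^t(A_{p'},M)$ via direct limits of homology and the universal coefficient theorem), which is a correct and complete filling-in of the omitted step.
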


\begin{proof} Let $A_p$ and $A_{p'}$ be the $p$-torsion and ${p'}$-torsion subgroups, respectively, of $A$. Assume $M$ is a discrete trivial $\hat{A}_p$-module of finite $p$-power order. Then $H^\ast(A,M) \cong H^\ast(A_p,M)$. Therefore, since $\hat{A}_p=A_p$, we have $H^\ast(\hat{A}_p,M)\cong H^\ast(A,M)$. 
\end{proof}

Now we have everything in place to prove the first of our two theorems. 

\begin{theorem} Let $p$ be a prime. If $G$ is a nilpotent FAR-group without any subgroups isomorphic to $C_{p^\infty}$, then $G$ belongs to $\mathcal{G}(p)$.
\end{theorem}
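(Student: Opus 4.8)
The plan is to induct on the order of the subgroup $\tau_p(G)$ consisting of the $p$-elements of $G$, reducing in the end to the torsion-free case settled in Lemma~2.10. (This order is finite: $\tau_p(G)$ is a nilpotent $p$-group all of whose abelian sections have finite $p$-rank and contain no copy of $C_{p^\infty}$, and this forces $\tau_p(G)$ to be finite — a standard fact about FAR-groups, provable also by a short induction up the lower central series of $\tau_p(G)$ along the lines of the key lemma below.) The mechanism for the induction is Lemma~2.7: I shall repeatedly exhibit a normal subgroup $M\unlhd_p G$ that lies in $\mathcal G(p)$ and conclude $G\in\mathcal G(p)$, using that $M$ is then automatically topologically $p$-embedded in $G$, that $M\in\mathcal{FH}$ by Proposition~2.4, and that $G/M$, being a finite $p$-group, lies in $\mathcal G(p)$.

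The key lemma is this: if $z\in Z(G)$ has order $p$, then $\bar G:=G/\langle z\rangle$ still contains no copy of $C_{p^\infty}$. Suppose $\bar D\leq\bar G$ were such a subgroup and let $D$ be its preimage in $G$, so $\langle z\rangle\leq D$ and $D/\langle z\rangle\cong C_{p^\infty}$. Then $D$ is a $p$-group (an extension of a $p$-group by a $p$-group) of nilpotency class at most $2$, with $[D,D]\leq\langle z\rangle\leq Z(D)$, so its commutator induces a well-defined alternating biadditive pairing on $D/\langle z\rangle\cong C_{p^\infty}$; this pairing factors through $\bigwedge^2 C_{p^\infty}=0$ (since $C_{p^\infty}\otimes_{\mathbb Z}C_{p^\infty}=0$), whence $[D,D]=1$ and $D$ is abelian. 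But an infinite abelian $p$-group of finite $p$-rank is a finite direct sum of cyclic and quasicyclic $p$-groups with at least one quasicyclic summand, and that summand is a copy of $C_{p^\infty}$ inside $G$ — a contradiction.

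For the base case $\tau_p(G)=1$, the torsion subgroup $T=\tau(G)$ is a $p'$-group; thus $\hat{T}_p=1$, and since group homology commutes with direct limits, $H_n(T,\mathbb Z)$ is $p'$-torsion for $n\geq 1$, so the universal coefficient theorem gives $H^n(T,M)=0$ for $n\geq 1$ and every finite $p$-primary module $M$. Hence $T\in\mathcal G(p)$; also $T\leq_{t(p)}G$ trivially (a $p'$-group has no proper subgroup of finite $p$-power index); and $G/T$, being a torsion-free nilpotent FAR-group, lies in $\mathcal G(p)$ by Lemma~2.10. Lemma~2.7 applied to $1\to T\to G\to G/T\to 1$ then yields $G\in\mathcal G(p)$. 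For the inductive step, suppose $\tau_p(G)\neq 1$ and choose $z\in\tau_p(G)\cap Z(G)$ of order $p$, possible because a non-trivial normal subgroup of a nilpotent group meets the centre. By the key lemma $\bar G=G/\langle z\rangle$ has no copy of $C_{p^\infty}$, and since every $p$-element of $\bar G$ lifts to a $p$-element of $G$ (if $\bar g^{p^k}=1$ then $g^{p^{k+1}}=1$) we have $\tau_p(\bar G)=\tau_p(G)/\langle z\rangle$, of strictly smaller order; so $\bar G\in\mathcal G(p)$ by induction. Now Lemma~2.8 gives $\langle z\rangle\leq_{t(p)}G$, so there is an $M\unlhd_p G$ with $M\cap\langle z\rangle=1$. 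Then $M$ is a nilpotent FAR-group without $C_{p^\infty}$-subgroups whose group of $p$-elements $M\cap\tau_p(G)$ is a proper subgroup of $\tau_p(G)$; hence $M\in\mathcal G(p)$ by induction, and Lemma~2.7 applied to $1\to M\to G\to G/M\to 1$ finishes the argument.

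The main obstacle will be the key lemma — the stability of the property ``no subgroup isomorphic to $C_{p^\infty}$'' under quotients by finite central subgroups. This is precisely why the induction runs along finite central subgroups rather than along the full centre: quotienting by $Z(G)$ can create $C_{p^\infty}$-subgroups in the quotient (already $\mathbb Z[1/p]/\mathbb Z\cong C_{p^\infty}$ warns against carelessness with quotients). Everything else is routine assembly with the extension-closure results of this section.
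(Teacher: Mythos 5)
Your proof is correct, and it takes a genuinely different route from the paper's. The paper inducts on the nilpotency class: the abelian case is split into torsion and torsion-free parts (Lemmas 2.10 and 2.11 plus Proposition 2.9), and the higher-class case is split into the residually finite situation (where $Z(G)$ is closed, so one can pass to $G/Z(G)$) and the general situation, which is handled via the finite residual $R$ using the structure theory of radicable nilpotent groups from [{\bf 3}, 5.3.1] (in particular that torsion elements of a radicable nilpotent group are central). You instead induct on the order of the $p$-torsion subgroup $\tau_p(G)$, peeling off central subgroups of order $p$: Lemma 2.8 applied to $G/\langle z\rangle$ produces an $M\unlhd_p G$ avoiding $z$, and Lemma 2.7 (with the correct observation that $M\unlhd_p G$ forces $M\leq_{t(p)} G$, by intersecting the finitely many conjugates of a given $K\unlhd_p M$) closes the induction; your base case replaces Lemma 2.11 by the remark that a $p'$-torsion normal subgroup is cohomologically invisible for $p$-primary coefficients and trivially topologically $p$-embedded. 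This completely avoids the residually-finite/finite-residual dichotomy and the radicability arguments, at the price of two structural inputs: the finiteness of $\tau_p(G)$, which you only sketch (it is standard --- a nilpotent $p$-group of finite rank is \v{C}ernikov, and a \v{C}ernikov group without $C_{p^\infty}$-subgroups is finite --- and your sketch via the key lemma also works, though note that a priori you only control subgroups of $G$, not arbitrary sections of $\tau_p(G)$, which is exactly the gap the key lemma fills), and the key lemma itself, whose tensor-product argument ($C_{p^\infty}\otimes C_{p^\infty}=0$ kills the commutator pairing, and an infinite abelian $p$-group of finite $p$-rank must have a quasicyclic summand) is correct and rather elegant. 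One small advantage of the paper's organization is that its finite-residual machinery is reused verbatim for the solvable results (Theorems 2.14 and 3.1), whereas your induction is specific to the nilpotent case.
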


\begin{proof} We argue by induction on the nilpotency class of $G$. First we assume that $G$ is abelian. Let $T$ be the torsion subgroup of $G$. Then $T$ belongs to $\mathcal{G}(p)$ by Lemma 2.11. Also, $G/T$ is in $\mathcal{G}(p)$ by Lemma 2.10. Consequently, we can conclude from Proposition 2.9 that $G$ belongs to $\mathcal{G}(p)$. Next we consider the case where the nilpotency class exceeds one.  Initially, we suppose that $G$ is residually finite. In this case, $Z(G)$ is a closed subgroup of $G$ with respect to the profinite topology, implying that $G/Z(G)$ is residually finite. Therefore, by the inductive hypothesis, $G/Z(G)$ is in $\mathcal{G}(p)$. In addition, by the base case, $Z(G)$ belongs to $\mathcal{G}(p)$. Proposition 2.9, then, yields that $G$ is contained in $\mathcal{G}(p)$. For the general case, we let $R$ be the finite residual of $G$. According to [{\bf 3}, 5.3.1], $R$ is radicable. Moreover, by the residually finite case, $G/R$ is contained in $\mathcal{G}(p)$. Let $T$ be the torsion subgroup of $R$. In a group that is both radicable and nilpotent, all elements of finite order are contained in the center (see [{\bf 3}, p. 98]).  Hence $T\leq Z(R)$. Therefore, by Lemma 2.11, $T$ belongs to $\mathcal{G}(p)$. Also, $R/T$ is in $\mathcal{G}(p)$ by virtue of Lemma 2.10.  Invoking Proposition 2.9 once again, we obtain that $R$ belongs to $\mathcal{G}(p)$. Furthermore, as a radicable subgroup, $R$ must be  topologically $p$-embedded in $G$. It follows, then, from Lemma 2.7 that $G$ is in $\mathcal{G}(p)$.  
\end{proof}

From Theorem 2.12 and Lemma 2.2 we obtain

\begin{corollary}  Let $p$ be a prime. If $G$ is a nilpotent FAR-group without any subgroups isomorphic to $C_{p^\infty}$, then $G$ belongs to
$\mathcal{G}^\ast(p)$. 
\end{corollary}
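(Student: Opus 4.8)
The plan is to derive the corollary by feeding Theorem 2.12 into Weigel's criterion, Lemma 2.2. By that lemma it suffices to check that \emph{every} subgroup of finite index in $G$ belongs to $\mathcal{G}(p)$. So I would fix an arbitrary $H\leq_f G$ and verify that $H$ itself satisfies the hypotheses of Theorem 2.12, after which that theorem gives $H\in\mathcal{G}(p)$ directly.

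The verification that $H$ inherits the relevant properties is routine. First, $H$ is nilpotent, being a subgroup of the nilpotent group $G$. Second, $H$ is a FAR-group: any abelian section of $H$ is a quotient of a subgroup of $H$, hence a quotient of a subgroup of $G$, hence an abelian section of $G$, and therefore has finite torsion-free rank and finite $p$-rank for every prime. Third, $H$ contains no subgroup isomorphic to $C_{p^\infty}$, since such a subgroup would also be a subgroup of $G$, contradicting the hypothesis on $G$.

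With these three points in hand, Theorem 2.12 applies to $H$ and yields $H\in\mathcal{G}(p)$. Since $H\leq_f G$ was arbitrary, Lemma 2.2 gives $G\in\mathcal{G}^\ast(p)$, which is the assertion. I do not anticipate any genuine obstacle here: the entire content of the argument is the observation that the class of nilpotent FAR-groups without $C_{p^\infty}$-subgroups is closed under passage to subgroups, which is precisely the three-fold check above, and then a single invocation of each of Theorem 2.12 and Lemma 2.2.
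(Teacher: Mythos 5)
Your argument is correct and is exactly the route the paper takes: the corollary is stated as following "from Theorem 2.12 and Lemma 2.2," and the only content is the closure of nilpotent FAR-groups without $C_{p^\infty}$-subgroups under passage to finite-index subgroups, which you verify. Nothing is missing.
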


Corollary 2.13 allows us to prove our second theorem.

\begin{theorem} Assume $p$ is a prime. If $G$ is a solvable FAR-group without any subgroups isomorphic to $C_{p^\infty}$, then $G$ is contained in $\mathcal{G}^\ast(p)$.
\end{theorem}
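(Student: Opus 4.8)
The plan is to derive the theorem from Corollary~2.13 by using Lemma~2.4 to assemble $G$ out of pieces covered by that corollary. Two preliminary observations make this feasible. First, by Proposition~2.4 every solvable FAR-group lies in $\mathcal{FH}$, so the $\mathcal{FH}$-hypothesis of Lemma~2.4 will be automatic for whatever normal subgroup we split off. Second, a residually finite group contains no copy of $C_{p^\infty}$ (since $C_{p^\infty}$ is not residually finite and subgroups of residually finite groups are residually finite); hence, for a residually finite solvable FAR-group, the ``no $C_{p^\infty}$-subgroups'' hypothesis of the theorem is automatic.

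\emph{Step 1: reduction to the residually finite case.} Let $R$ be the finite residual of $G$. It is a characteristic subgroup, and being an intersection of open (hence closed) subgroups it is closed in the profinite topology on $G$, so Lemma~2.5 gives $R\leq_t G$. By the structure theory of solvable FAR-groups, $R$ is radicable, and a radicable solvable FAR-group is nilpotent (see [{\bf 3}]); moreover $R\leq G$, so $R$ has no subgroup isomorphic to $C_{p^\infty}$. Thus $R$ is a nilpotent FAR-group without $C_{p^\infty}$-subgroups, whence $R\in\mathcal{G}^\ast(p)$ by Corollary~2.13, and $R\in\mathcal{FH}$ by Proposition~2.4. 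Since $G/R$ is residually finite, once we know the theorem for residually finite solvable FAR-groups we may conclude $G/R\in\mathcal{G}^\ast(p)$, and then Lemma~2.4 applied to $1\to R\to G\to G/R\to 1$ yields $G\in\mathcal{G}^\ast(p)$. It therefore suffices to treat residually finite $G$.

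\emph{Step 2: the residually finite case, by induction on the derived length $d$ of $G$.} If $d\le 1$ then $G$ is an abelian (automatically $C_{p^\infty}$-free) FAR-group, so $G\in\mathcal{G}^\ast(p)$ by Corollary~2.13. Suppose $d\ge 2$, and give $G$ the profinite topology, which is a Hausdorff group topology because $G$ is residually finite. Let $A=G^{(d-1)}$ be the last nontrivial term of the derived series, an abelian normal subgroup, and let $\overline{A}$ be its closure. Then $\overline{A}$ is a closed normal subgroup, and it is again abelian, because the closure of an abelian subgroup in a Hausdorff topological group is abelian (the commutator map, being continuous, sends $\overline{A}\times\overline{A}$ into the closure of $\{1\}$, namely $\{1\}$). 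Since $\overline{A}$ is abelian while $G$ is not (as $d\ge 2$), we have $\overline{A}\ne G$; and $\overline{A}\supseteq G^{(d-1)}$, so $G/\overline{A}$ has derived length $<d$. Now $\overline{A}$ is an abelian FAR-group, hence lies in $\mathcal{G}^\ast(p)$ by Corollary~2.13 and in $\mathcal{FH}$ by Proposition~2.4; being closed and normal, $\overline{A}\leq_t G$ by Lemma~2.5. And $G/\overline{A}$ is a residually finite solvable FAR-group (residually finite precisely because $\overline{A}$ is closed in the profinite topology) of derived length $<d$, so $G/\overline{A}\in\mathcal{G}^\ast(p)$ by the inductive hypothesis. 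Lemma~2.4 applied to $1\to\overline{A}\to G\to G/\overline{A}\to 1$ now gives $G\in\mathcal{G}^\ast(p)$, completing the induction.

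The substantive obstacle is Step~1: one must know that the finite residual $R$, though possibly non-abelian, is radicable and therefore nilpotent, so that Corollary~2.13 applies to it; and here the hypothesis that $G$ has no $C_{p^\infty}$-subgroups is indispensable, since a radicable nilpotent FAR-group possessing a $C_{p^\infty}$-subgroup need not lie in $\mathcal{G}^\ast(p)$. Beyond that, the only point requiring care is the passage to the closure $\overline{A}$ of $G^{(d-1)}$ in Step~2: this produces a topologically embedded abelian normal subgroup, exactly the input Lemma~2.4 requires, without increasing the derived length, and it is the residual finiteness of $G$ (equivalently, the Hausdorffness of the profinite topology) that guarantees $\overline{A}$ is still abelian, which is why the reduction in Step~1 is performed first.
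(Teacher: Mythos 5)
Your proof is correct and follows essentially the same route as the paper: induction on derived length using the closure of the last derived term (topologically embedded via the closed-normal-subgroup lemma) in the residually finite case, and the finite residual (nilpotent, radicable, handled by Corollary 2.13) for the general case. The only discrepancies are cosmetic: your lemma numbers are shifted (the extension lemma you call Lemma 2.4 is the paper's Lemma 2.5, the closed-subgroup lemma is 2.6, and the $\mathcal{FH}$ statement is Proposition 2.7), and you justify $R\leq_t G$ via closedness rather than radicability, which is equally valid.
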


\begin{proof} The proof is by induction on the derived length. With the result already established for $G$ abelian, we proceed directly to the case where the derived length of $G$ is greater than one. 
First we suppose that $G$ is residually finite. Let $A$ be the closure in the profinite topology of the last nontrivial group in the derived series of $G$. Then $A$ is abelian and, by virtue of Lemma 2.6,  $A\unlhd_t G$. By the base case, $A$ is in $\mathcal{G}^\ast(p)$; moreover, by the inductive hypothesis, $G/A$ belongs to $\mathcal{G}^\ast(p)$. Lemma 2.5, then, yields that  $G$ is in $\mathcal{G}^\ast(p)$. For the general case, we let $R$ be the finite residual of $G$. Then $G/R$ belongs to $\mathcal{G}^\ast(p)$ by the residually finite case. According to [{\bf 3}, 5.3.1], $R$ is both nilpotent and radicable. In view of Corollary 2.13, the former property implies that $R$ is in $\mathcal{G}^\ast(p)$.  In addition, $R$'s radicability guarantees that $R\leq_t G$. Hence $G$ is in $\mathcal{G}^\ast (p)$ by Lemma 2.5. 
\end{proof}

\begin{remark} {\rm The case of Theorem 2.14 where $G$ is virtually torsion-free was first proved by P. Kropholler  \cite{kropholler}.}
\end{remark}
\vspace{10pt}

We conclude this section with an example illustrating the necessity of the finite-rank hypothesis in both Theorems 2.12 and 2.14.

\begin{example} {\rm Let $G$ be the restricted direct product  of countably many copies of $C_{\infty}$. We will show that $G$ is not contained in either $\mathcal{G}(p)$ or $\mathcal{G}^\ast(p)$ for any prime $p$. To accomplish this, we employ the group $\Gamma$ with the following presentation:

\begin{displaymath} {\Gamma  =  \langle a, x_1, x_2, \cdots  \ |\  a^p=1; \ \ \ \ [a,x_i]=1, [x_i,x_{i+1}]=a, \ \ \mbox{\rm and} \ \ [x_i,x_j]=1 \ \ \mbox{\rm for all $i, j \in \mathbb N$ with $|i-j|\neq 1$} \rangle.}\end{displaymath}
This group is nilpotent of class 2 and fits into a central extension $1\to \langle a\rangle \to \Gamma\to G\to 1$. If $G$ were in $\mathcal{G}(p)$, then, according to Lemma 2.8,  $\Gamma$ would necessarily contain a normal subgroup of $p$-power index that intersected $\langle a\rangle$ trivially. We claim that no such subgroup exists, implying that $G$ must lie outside $\mathcal{G}(p)$. To establish this, let $N\unlhd_p \Gamma$. 
Then $x_ix_j^{-1}\in N$ for some pair of distinct natural numbers $i$ and $j$. If $j\neq i+2$, then
$$a = [x_ix_j^{-1},x_{i+1}]\in N.$$ 
Further, if $j=i+2$, then
$$a = [x_{i+3},x_ix_j^{-1}]\in N.$$
Thus our claim follows. A similar argument verifies that $G$ is also not a member of $\mathcal{G}^\ast(p)$. Moreover, our reasoning can be easily adapted to establish that the restricted direct product of countably many copies of $C_p$ is another example of an abelian group outside both $\mathcal{G}(p)$ and $\mathcal{G}^\ast(p)$.
}
\end{example}

\section{Subgroups of finite index with rich pro-$p$ topologies}
\vspace{10pt}

Our aim in this section is to find subgroups of finite index in a solvable FAR-group that have pro-$p$ topologies replicating the properties of the pro-$p$ topology of a nilpotent group. In our first theorem we establish that any solvable FAR-group without $C_{p^\infty}$-subgroups has a subgroup of finite index in $\mathcal{G}(p)$. 

\begin{theorem}  Let $p$ be a prime. Assume $G$ is a solvable FAR-group without any subgroups isomorphic to $C_{p^\infty}$. Then there exists $N\unlhd_f G$ such that every subgroup of finite index in $N$ belongs to the class $\mathcal{G}(p)$.  
\end{theorem}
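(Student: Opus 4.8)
The plan is to argue by induction on the derived length of $G$, after first absorbing the finite residual. Write $R$ for the finite residual of $G$. By [{\bf 3}, 5.3.1] the group $R$ is nilpotent and radicable, and, being a subgroup of $G$, it has no $C_{p^\infty}$-subgroups; hence $R\in\mathcal{G}(p)$ by Theorem 2.12 and $R\in\mathcal{FH}$ by Proposition 2.4. Moreover, $R$ being radicable, it has no proper subgroup of finite $p$-power index, so $R\leq_{t(p)}K$ for every group $K\geq R$. Now suppose $N\unlhd_f G$ with $R\leq N$, and let $H\leq_f N$. The finite residual of $H$ is seen to be $R\cap H$, which, having finite index in the radicable group $R$, equals $R$; thus $R\leq H$. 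Applying Lemma 2.7 to $1\to R\to H\to H/R\to 1$, we obtain $H\in\mathcal{G}(p)$ whenever $H/R\in\mathcal{G}(p)$. Hence it suffices to prove the theorem for $G/R$, so we may assume $G$ residually finite --- in which case the hypothesis on $C_{p^\infty}$-subgroups is automatic.

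Now induct on the derived length of the residually finite group $G$, the abelian case being Theorem 2.12 (take $N=G$, since every finite-index subgroup of an abelian FAR-group without $C_{p^\infty}$-subgroups is again one). For the inductive step, let $A$ be the closure in the profinite topology of $G$ of the last nontrivial term of the derived series of $G$; as in the proof of Theorem 2.14, $A$ is an abelian characteristic subgroup, $G/A$ is residually finite (since $A$ is closed), and $G/A$ is a solvable FAR-group of strictly smaller derived length. By the inductive hypothesis, $G/A$ has a normal subgroup $\bar N$ of finite index all of whose finite-index subgroups lie in $\mathcal{G}(p)$; let $N_1\unlhd_f G$ be its preimage. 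For any $H\leq_f N_1$, consider $1\to H\cap A\to H\to H/(H\cap A)\to 1$: here $H\cap A$ is an abelian FAR-group without $C_{p^\infty}$-subgroups, hence lies in $\mathcal{G}(p)\cap\mathcal{FH}$ by Theorem 2.12 and Proposition 2.4, while $H/(H\cap A)\cong HA/A$ is a finite-index subgroup of $\bar N$ and so lies in $\mathcal{G}(p)$. By Lemma 2.7, $H\in\mathcal{G}(p)$ provided $H\cap A\leq_{t(p)}H$; so everything comes down to shrinking $N_1$ to a subgroup $N\unlhd_f G$ for which this holds for \emph{every} $H\leq_f N$.

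This is the crux. Since $A$ is an abelian FAR-group, its pro-$p$ completion $\widehat A_p$ is the direct sum of a finitely generated free $\mathbb Z_p$-module and a finite $p$-group, so ${\rm Aut}(\widehat A_p)$ is virtually pro-$p$; I would therefore pass to a subgroup of finite index, and then to its core in $G$, to obtain $N\unlhd_f G$ whose image in ${\rm Aut}(\widehat A_p)$ lies in a pro-$p$ subgroup --- that is, $N$ acts ``$p$-adically unipotently'' on $A$. Using that commensurable $\mathbb Z_p$-lattices have conjugate $p$-adic closures, one checks that this property descends to the layers: for each $H\leq_f N$, the group $H$ acts through a pro-$p$ group on $\widehat{H\cap A}_p$. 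Granting this, fix $H\leq_f N$ and $K\unlhd_p(H\cap A)$. The preimages in $H\cap A$ of the subgroups $p^k\,\widehat{H\cap A}_p$ are $H$-invariant, of finite $p$-power index, and cofinal, so we may choose such an $A_0\leq K$. Then $H$ acts on the finite $p$-group $(H\cap A)/A_0$ through a finite $p$-group, so the kernel $H_0$ of this action has finite $p$-power index in $H$; in $H_0/A_0$ the subgroup $(H\cap A)/A_0$ is central, and $(H_0/A_0)\big/\big((H\cap A)/A_0\big)\cong H_0/(H\cap A)$ is a finite-index subgroup of $\bar N$, hence lies in $\mathcal{G}(p)$. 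By Lemma 2.8 (equivalently, Proposition 2.9) there is a subgroup of finite $p$-power index in $H_0/A_0$ meeting $(H\cap A)/A_0$ trivially; pulling it back to $H_0$ and intersecting its $H$-conjugates yields $L\unlhd_p H$ with $L\cap(H\cap A)\leq A_0\leq K$. Thus $H\cap A\leq_{t(p)}H$, Lemma 2.7 gives $H\in\mathcal{G}(p)$, and, with the first paragraph, this completes the induction.

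The step I expect to be the real obstacle is the descent claim in the third paragraph: one must control the pro-$p$ completion of each layer $H\cap A$ in terms of that of $A$ precisely enough that the action of $H$ on $\widehat{H\cap A}_p$ remains pro-$p$, and one must likewise check that the indices $[H:H_0]$ and the cores of the pulled-back subgroups are genuinely finite $p$-powers. Everything else is bookkeeping around Lemmas 2.7 and 2.8 and the inductively supplied subgroup $\bar N$.
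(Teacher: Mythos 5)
Your overall strategy --- absorb the finite residual $R$ via its radicability and Lemma 2.7, then induct on derived length with $A$ the profinite closure of the last derived term, reducing everything to showing $H\cap A\leq_{t(p)}H$ for every $H\leq_f N$ --- is exactly the paper's. Where you diverge is in how that last step is executed. The paper takes $N=C_M(A/A^p)$ (which is your congruence-subgroup condition read modulo $p$), proves by an auxiliary induction that each $N/A^{p^i}$ lies in $\mathcal{G}(p)$ via Proposition 2.9 applied to the central extensions $1\to A^{p^{i-1}}/A^{p^i}\to N/A^{p^i}\to N/A^{p^{i-1}}\to 1$, and then applies Lemma 2.8 at each layer to produce $U_i\unlhd_p N$ with $U_i\cap A^{p^{i-1}}\leq A^{p^i}$; intersecting the $U_i$ gives $A\leq_{t(p)}N$ once and for all, after which $H\cap A\leq_{t(p)}H$ follows from the chain $H\cap A\leq_{t(p)}A\leq_{t(p)}N$ (finite-index subgroups of abelian groups are topologically $p$-embedded), with no need to descend to the modules $\widehat{H\cap A}_p$ at all. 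Your layer-by-layer version does work, and the descent claim you flag is true and not the obstacle you fear: because $H\cap A\leq_f A$ is topologically $p$-embedded in $A$, the map $\hat{(H\cap A)}_p\to\hat{A}_p$ identifies $\hat{(H\cap A)}_p$ with the closure $L$ of $H\cap A$, an open $H$-invariant submodule containing $p^a\hat{A}_p$ where $p^a$ is the $p$-part of $[A:H\cap A]$; the image of $H$ in $\mathrm{Aut}(L)$ is then a continuous image of a subgroup of a pro-$p$ group, hence lands in a pro-$p$ group, and $(H\cap A)/p^j(H\cap A)\cong L/p^jL$ since each is the largest exponent-$p^j$ quotient of the other's dense subgroup. (Note also that you can simply take $A_0=p^j(H\cap A)$, which is characteristic in $H\cap A$ and of finite $p$-power index because $(H\cap A)/p(H\cap A)$ is finite; the congruence condition on $N$ is genuinely needed only to make $[H:H_0]$ a $p$-power.) In short: your proof is correct modulo writing out this descent, but the paper's device of working with the single tower $A\geq A^p\geq A^{p^2}\geq\cdots$ inside the one group $N$ buys a cleaner argument that never has to compare the pro-$p$ completions of the various layers $H\cap A$ with that of $A$.
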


\begin{proof} First we treat the case where $G$ is residually finite.  
We proceed by induction on the derived length of $G$, the abelian case having already been established in Theorem 2.12. For the inductive step, let $A$ be the closure in the profinite topology of the last nontrivial group in the derived series. Then $A$ is an abelian characteristic subgroup of $G$, and $G/A$ is residually finite. By the inductive hypothesis, there exists $M\unlhd_f G$  such that $A\leq M$ and every subgroup of $M/A$ with finite index belongs to $\mathcal{G}(p)$. Now we set $N=C_M(A/A^p)$. Then $N\unlhd_f G$. We will prove that, in addition, $A\leq_{t(p)} N$. To begin with, we show by induction on $i$ that $N/A^{p^i}$ is in $\mathcal{G}(p)$ for each $i\geq 0$. The case for $i=0$ follows from our selection of $M$. If $i\geq 1$, we invoke the central extension
$$1\longrightarrow A^{p^{i-1}}/A^{p^i}\longrightarrow N/A^{p^i}\longrightarrow N/A^{p^{i-1}}\longrightarrow 1,$$
deducing on the basis of Proposition 2.9 that $N/A^{p^i}$ is in $\mathcal{G}(p)$. Since $N/A^{p^i}$ belongs to $\mathcal{G}(p)$ for all $i\geq 0$, it follows from Lemma 2.8 that $A^{p^{i-1}}/A^{p^i}$ is topologically $p$-embedded in $N/A^{p^i}$ for all $i\geq 1$. Hence, for each $i\geq 1$, there exists $U_i\unlhd_p N$ such that $A^{p^i}\leq U_i$ and $U_i\cap A^{p^{i-1}}\leq A^{p^i}$. For each $k\in \mathbb N$, let $V_k$ be the intersection of $U_1,\cdots , U_k$. Then $V_k\unlhd_p N$ and $V_k\cap A\leq A^{p^k}$. Therefore, $A\leq_{t(p)} N$. Now let $H\leq_f N$. Any subgroup of finite index in an abelian group is topologically $p$-embedded; hence  $H\cap A\leq_{t(p)} A$. As a result,  $H\cap A\leq_{t(p)} H$. Moreover, $H/H\cap A\cong HA/A\leq_f M/A$, so that $H/H\cap A$ is in $\mathcal{G}(p)$. Therefore, by Lemma 2.7, $H$ is contained in $\mathcal{G}(p)$. 

For the general case, we let $R$ be the finite residual of $G$. By [{\bf 3}, 5.3.1], $R$ is both nilpotent and radicable. From the first property we can conclude that $R$ belongs to $\mathcal{G}(p)$. Moreover, $R$'s radicability ensures that it is topologically $p$-embedded in $G$.  By the residually finite case, $G$ contains a normal subgroup $N$ of finite index such that $R\leq N$ 
and all the subgroups of $N/R$ with finite index are in the class $\mathcal{G}(p)$. Suppose $H\leq_f N$. Then $R\leq_{t(p)} H$, and $H/R$ is in $\mathcal{G}(p)$. Therefore, by Lemma 2.7, $H$ is in $\mathcal{G}(p)$.
\end{proof}

In the remainder of this section, we discern the presence of subgroups of finite index that share even more properties with nilpotent groups. Our goal is to prove the following theorem.

\begin{theorem} Assume $G$ is a solvable FAR-group. For any prime $p\notin {\rm spec}(G)$, there exists $N\unlhd_f G$ with the following three properties. 
\vspace{5pt}

(i) Each section of $N$ belongs to the class $\mathcal{G}(p)$. 
\vspace{5pt}

(ii) The $p$-residual of any section of $N$ is a $p'$-group.
\vspace{5pt}

(iii) The pro-$p$ topology on $N$ is homogeneous. 
\end{theorem}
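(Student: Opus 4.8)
The plan is to imitate the proof of Theorem~3.1, carrying all three conclusions through the same induction, and the reusable construction will again be $N=C_M(A/A^p)$. As there, one first reduces to the residually finite case. Let $R$ be the finite residual of $G$; by [{\bf 3}, 5.3.1] it is radicable and nilpotent, and $R\leq_{t(p)}G$ since radicable subgroups are topologically $p$-embedded. Because $p\notin{\rm spec}(G)$, the group $R$ has no $C_{p^\infty}$-section; but a nontrivial torsion-free nilpotent group has a nontrivial torsion-free abelian quotient, which, being also radicable here, would be a nonzero rational vector space and hence possess $C_{p^\infty}$-sections. So $R$ is torsion, and then, its torsion elements being central ([{\bf 3}, p.\ 98]), abelian; thus $R$ is a direct sum of copies of quasicyclic groups $C_{q^\infty}$ with $q\neq p$. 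Such a group, and each of its sections, is a $p'$-group, hence has trivial pro-$p$ completion and, by the universal coefficient theorem, trivial cohomology in positive degrees with coefficients of finite $p$-power order; so it lies in $\mathcal G(p)$, is $p$-homogeneous, and has $p$-residual a $p'$-group, i.e.\ $R$ itself satisfies (i), (ii) and (iii). It therefore suffices to treat $G/R$ and then pull the resulting subgroup back through $R$, transferring the three properties across that extension exactly as in the inductive step below (with $R$ in the role of $A$), using Lemma~2.7---applicable because a solvable FAR-group lies in $\mathcal{FH}$ by Proposition~2.8.

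For the residually finite case I induct on the derived length. The base case is that of an abelian FAR-group $A$ with no $C_{p^\infty}$-section, where $N=A$ works: (i) is Theorem~2.12 applied to each section of $A$ (again an abelian FAR-group with no $C_{p^\infty}$-subgroup), and for (ii) and (iii) one needs the structural facts that the $p$-residual of $A$ is its $p'$-torsion subgroup $t_{p'}(A)$, that $A/t_{p'}(A)$ is residually $p$-finite, and that $A$ is $p$-homogeneous---whereupon the same statements apply to every section of $A$. The residual $p$-finiteness of $A/t_{p'}(A)$ uses that its $p$-torsion subgroup is finite (finite $p$-rank, no $C_{p^\infty}$-subgroup) while its torsion-free quotient has trivial intersection of $p$-power-index subgroups: that intersection is $p$-divisible, torsion-free and of finite rank, so if nonzero it would contain a copy of $\mathbb Z[1/p]$ and hence yield a $C_{p^\infty}$-section. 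It is precisely here that the hypothesis $p\notin{\rm spec}(G)$---as opposed to the mere absence of $C_{p^\infty}$-subgroups---is indispensable. The $p$-homogeneity of $A$ reduces, after factoring out $t_{p'}(A)$ and then the finite $p$-torsion, to the assertion that every subgroup of a finite-rank torsion-free abelian group with no $C_{p^\infty}$-section is topologically $p$-embedded, which follows from the flatness of $\mathbb Z_p$ over $\mathbb Z$ together with the identification of the pro-$p$ completion of such a group with its tensor product with $\mathbb Z_p$.

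For the inductive step, let $A$ be the closure in the profinite topology of $G$ of the last nontrivial term of the derived series; then $A$ is an abelian characteristic subgroup, $G/A$ is residually finite, and $p\notin{\rm spec}(G/A)$. By the inductive hypothesis there is $M\unlhd_f G$ with $A\leq M$ such that $M/A$ is $p$-homogeneous and every section of $M/A$ lies in $\mathcal G(p)$ and has $p$-residual a $p'$-group. As in the proof of Theorem~3.1, set $N=C_M(A/A^p)$; that proof already gives $N\unlhd_f G$, $A\leq_{t(p)}N$, and (by its filtration argument) $N/A^{p^i}\in\mathcal G(p)$ for all $i$. Now let $S=H/K$ be a section of $N$, with $H\leq N$, and put $B=(H\cap A)K/K$. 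Since $H\cap A\leq_{t(p)}A$ (the base case applied to $A$) and $A\leq_{t(p)}N$, the subgroup $B$ is topologically $p$-embedded in $S$; moreover $B$ is a section of the abelian group $A$ and $S/B$ is a section of $M/A$. Lemma~2.7 then yields $S\in\mathcal G(p)$, giving (i); and the parallel pro-$p$-completion computations, using that both $B$ and $S/B$ have $p$-residual a $p'$-group and are $p$-homogeneous, yield (ii) and (iii) for $N$.

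The step I expect to be the main obstacle is the propagation of properties (ii) and (iii). Membership in $\mathcal G(p)$ passes through an extension cleanly once the normal subgroup is topologically $p$-embedded and lies in $\mathcal{FH}$ (Lemma~2.7), but having $p$-residual a $p'$-group, and being $p$-homogeneous, behave far worse under extensions and under passage to sections. One therefore needs auxiliary lemmas of the shape: in an extension $1\to B\to S\to T\to 1$ with $B\leq_{t(p)}S$, with $B$ residually $p$-finite modulo its $p'$-torsion, and with $T$ having $p$-residual a $p'$-group (respectively $T$ being $p$-homogeneous), the group $S$ again has $p$-residual a $p'$-group (respectively is $p$-homogeneous). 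The crux is then to verify that the subgroups $C_M(A/A^p)$ are exactly those for which the hypotheses of these lemmas hold: the compatibility between the pro-$p$ topology that $A$ induces on a section $S$ and the pro-$p$ topology on the complementary quotient is precisely what forcing $N$ to centralise $A/A^p$---together with the exclusion of $C_{p^\infty}$-sections---buys.
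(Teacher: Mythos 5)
Your proposal follows essentially the same route as the paper: induction on derived length, base case for abelian groups, the subgroup $N=C_M(A/A^p)$, the fact $A\leq_{t(p)}N$ imported from the proof of Theorem~3.1, and Lemma~2.7 to push membership in $\mathcal G(p)$ through the extension $1\to\bar A\to\bar N\to\overline{N/A}\to 1$. Your extra preliminary reduction via the finite residual $R$ (showing $R$ is a radicable torsion abelian $p'$-group because $p\notin{\rm spec}(G)$) is sound and harmless, and your treatment of the abelian base case, while more explicit than the paper's appeal to Proposition~3.7, reaches the same conclusions by essentially the same structural facts.

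The genuine gap is the one you yourself flag at the end: the two transfer lemmas needed to propagate (ii) and (iii) across an extension with topologically $p$-embedded kernel are stated as ``auxiliary lemmas one needs'' but never proved. For (ii) this is forgivable --- the paper's Lemma~3.6 is a two-line diagram chase comparing $1\to N\to G\to Q\to 1$ with $1\to\hat N_p\to\hat G_p\to\hat Q_p\to 1$. For (iii), however, the corresponding statement (the paper's Lemma~3.5: if $N\leq_{t(p)}G$, $G\in\mathcal{FS}$, and $N$ and $Q$ are $p$-homogeneous, then $G$ is $p$-homogeneous) is the real technical content of the whole theorem, and it does not follow by a routine comparison of completions: given $H\leq G$ and $K\unlhd_p H$, one must manufacture a normal subgroup of $p$-power index in $G$ meeting $H$ inside $K$, and the paper does this in two stages (first for finite $p$-group kernels, by intersecting the finitely many $G$-conjugates of a complement --- this is where $\mathcal{FS}$ enters --- and then reducing the general case to that one by passing to $G/(N\cap L)$). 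Relatedly, your claim that $B=(H\cap A)K/K\leq_{t(p)}S$ ``since $H\cap A\leq_{t(p)}A$ and $A\leq_{t(p)}N$'' is under-justified: topological $p$-embedding does not obviously descend to quotients (the subgroup $U\cap(H\cap A)K$ need not lie in $(U\cap H\cap A)K$), and this is precisely the difficulty the two-stage argument of Lemma~3.5 is designed to circumvent. The paper sidesteps the issue for sections by first proving $N$ itself is $p$-homogeneous and then invoking the easy fact (Lemma~3.3) that $p$-homogeneity passes to sections, which gives $\bar A\leq_{t(p)}\bar N$ for free; you should restructure your section argument the same way rather than trying to push $\leq_{t(p)}$ through the quotient by $K$ directly.
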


Before embarking on the proof of Theorem 3.2, we require some preliminary lemmas concerning the pro-$p$ topology of a group. The first of these is very simple to deduce; hence we omit its proof. 

\begin{lemma} Assume $p$ is a prime, $G$ a group, and $S$ a section of $G$. If $G$ is $p$-homogeneous, then so is $S$.
\end{lemma}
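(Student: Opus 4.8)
The plan is to reduce to two special cases, since a section of $G$ is by definition a quotient of a subgroup of $G$: namely, I would show separately that (a) every subgroup of a $p$-homogeneous group is $p$-homogeneous, and (b) every quotient of a $p$-homogeneous group is $p$-homogeneous. Applying (a) then (b) gives the lemma. In both cases the verification proceeds directly from statement~2 of the characterization of $\leq_{t(p)}$ recorded in the Notation section: to see that a group is $p$-homogeneous it suffices to check that, for every subgroup $H$ and every $K\unlhd_p H$, there is a normal subgroup of $p$-power index meeting $H$ inside $K$.

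For case (a), let $G$ be $p$-homogeneous and $U\leq G$. Given $H\leq U$ and $K\unlhd_p H$, since $H\leq U\leq G$ and $G$ is $p$-homogeneous there exists $N'\unlhd_p G$ with $N'\cap H\leq K$. Then $N:=N'\cap U\unlhd_p U$ (its index in $U$ divides $[G:N']$), and $N\cap H=N'\cap H\leq K$ because $H\leq U$; hence $H\leq_{t(p)}U$, and $U$ is $p$-homogeneous. For case (b), let $U$ be $p$-homogeneous, $V\unlhd U$, and put $S=U/V$, writing $\overline{\phantom{x}}$ for images in $S$. Given $\overline{H}\leq S$ and $\overline{K}\unlhd_p\overline{H}$, pulling back along $U\to S$ yields $V\leq K\leq H\leq U$ with $K\unlhd_p H$ (the index is unchanged). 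By $p$-homogeneity of $U$ there is $N\unlhd_p U$ with $N\cap H\leq K$. Then $\overline{N}=NV/V\unlhd_p S$, and using the Dedekind modular law with $V\leq H$ we get
\begin{displaymath}
\overline{N}\cap\overline{H}=\overline{NV\cap H}=\overline{(N\cap H)V}\leq\overline{KV}=\overline{K},
\end{displaymath}
since $N\cap H\leq K$ and $V\leq K$. Thus $\overline{H}\leq_{t(p)}S$ for every $\overline{H}\leq S$, so $S$ is $p$-homogeneous.

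The argument is entirely formal, so I do not expect a genuine obstacle; the only point requiring mild care is the quotient case, where the image $\overline{N}$ of the chosen $N$ need not contain $V$, forcing one to intersect with $\overline{H}$ first and then invoke the modular law to control $\overline{N}\cap\overline{H}$, together with the routine check that the relevant indices remain $p$-powers. Everything else is bookkeeping with the equivalent formulations of topological $p$-embedding.
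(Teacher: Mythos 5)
Your proof is correct and complete: both the subgroup case (intersecting the witness $N'\unlhd_p G$ with $U$) and the quotient case (pulling back, then using the Dedekind law $NV\cap H=(N\cap H)V$ with $V\leq H\leq K$... wait, with $V\leq K\leq H$) are verified exactly as one would want. The paper itself omits the proof of this lemma as "very simple to deduce," so your argument is precisely the routine verification the author had in mind.
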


\begin{lemma} Assume $G$ is a group in the class $\mathcal{FS}$. Let $p$ be a prime and $N\unlhd_p G$. If $N$ is $p$-homogeneous, then so is~$G$.
\end{lemma}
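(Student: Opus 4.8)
The plan is to show directly that every subgroup $H \leq G$ is topologically $p$-embedded in $G$, using the hypothesis that $N$ is $p$-homogeneous together with the finitely-many-subgroups-of-each-index property guaranteed by $G \in \mathcal{FS}$. So fix $H \leq G$ and let $K \unlhd_p H$; I need to produce $L \unlhd_p G$ with $L \cap H \leq K$. The natural intermediary is $H \cap N$, which has finite index in $H$ (indeed finite $p$-power index, since $N \unlhd_p G$), and I will try to descend from $K$ into $H \cap N$ and then push forward along $N$.

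First I would pass to $K_0 = K \cap (H \cap N)$, which is a normal subgroup of $H \cap N$ of finite $p$-power index (its index divides $[H:K]\cdot[H : H\cap N]$, a $p$-power). Since $H \cap N \leq N$ and $N$ is $p$-homogeneous, $H \cap N \leq_{t(p)} N$, so there is $P \unlhd_p N$ with $P \cap (H \cap N) \leq K_0 \leq K$. The remaining task is to replace $P$ by a subgroup that is normal in all of $G$ while retaining finite $p$-power index. Here is where $\mathcal{FS}$ enters: let $L$ be the intersection of all the $G$-conjugates of $P$. A priori there could be infinitely many conjugates, but each conjugate $P^g$ is a subgroup of $N$ of the same (finite $p$-power) index as $P$, and by Proposition 2.7 (or rather the fact that solvable FAR-groups lie in $\mathcal{FS}$ — but here we are given $G \in \mathcal{FS}$ directly) $N$ has only finitely many subgroups of that index; hence the intersection $L$ is a finite intersection of subgroups of finite $p$-power index in $N$, so $[N : L]$ is a finite $p$-power. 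Combined with $[G:N]$ being a finite $p$-power, this gives $L \unlhd_p G$. Finally $L \leq P$, so $L \cap H = L \cap (H \cap N) \leq P \cap (H \cap N) \leq K$, which is exactly what is needed to conclude $H \leq_{t(p)} G$.

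One technical point to verify carefully is that $N$ itself (not just $G$) lies in $\mathcal{FS}$, so that the conjugates of $P$ are finite in number; this is immediate since a subgroup of a group in $\mathcal{FS}$ is again in $\mathcal{FS}$ (a subgroup of index $n$ in $N$ has bounded index in $G$, or one argues directly via $\mathrm{Hom}(N,F)$ — alternatively, just observe $[G:N] < \infty$ forces $N \in \mathcal{FS}$). The main obstacle, such as it is, is purely bookkeeping with indices: one must confirm at each stage that "finite index" can be upgraded to "finite $p$-power index," which works because $[G:N]$, $[H : H\cap N]$, $[H:K]$, and $[N:P]$ are all $p$-powers and the class $\leq_p$ is closed under the relevant intersections and subgroups. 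No deeper structural input about FAR-groups is required beyond the $\mathcal{FS}$ hypothesis already assumed.
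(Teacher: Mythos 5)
Your argument is correct and is essentially the paper's own proof: intersect $K$ with $N$, apply the $p$-homogeneity of $N$ to get a normal subgroup of $p$-power index in $N$ lying under $K$, then intersect its finitely many $G$-conjugates (finiteness coming from $\mathcal{FS}$) to obtain the required $L\unlhd_p G$. The index bookkeeping you flag all checks out.
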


\begin{proof} Let $H\leq G$ and $K\unlhd_p H$. Then $$K\cap N\unlhd_p H\cap N\leq N.$$
Since $N$ is $p$-homogeneous, we have a subgroup $L\unlhd_p N$ such that $L\cap H\leq K$.
Let $L'$ be the intersection of all the conjugates of $L$ in $G$. Then $L'\unlhd G$ and $L'\cap H\leq K$.
Moreover, each conjugate of $L$ in $G$ is normal in $N$ with finite $p$-power index, and, since $G$ is in $\mathcal{FS}$, there are only finitely many such conjugates. It follows, then, that $L'\unlhd_p N$, implying that $L'\unlhd_p G$. Therefore, $H\leq_{t(p)} G$. 
\end{proof}

\begin{lemma} Let $p$ be a prime. Assume $1\longrightarrow N\longrightarrow G\longrightarrow Q\longrightarrow 1$ is a short exact sequence of groups such that  $N\leq_{t(p)} G$ and $G$ belongs to $\mathcal{FS}$. If both $N$ and $Q$ are $p$-homogeneous, then $G$ is as well. 
\end{lemma}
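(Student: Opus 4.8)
The plan is to show directly that every subgroup $H\leq G$ is topologically $p$-embedded in $G$; since $H$ is arbitrary, this is precisely the assertion that $G$ is $p$-homogeneous. Fix $H\leq G$ and set $H_0=H\cap N$ and $\bar H=HN/N\leq Q$, so that $\bar H\cong H/H_0$. The first step is to observe that $H_0\leq_{t(p)}H$. Indeed, since $N$ is $p$-homogeneous we have $H_0\leq_{t(p)}N$, so the induced map $\widehat{H_0}_p\to\widehat N_p$ is injective; composing it with $\widehat N_p\to\widehat G_p$, which is injective because $N\leq_{t(p)}G$, shows that $\widehat{H_0}_p\to\widehat G_p$ is injective, i.e.\ $H_0\leq_{t(p)}G$. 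But by functoriality of pro-$p$ completion this last map factors as $\widehat{H_0}_p\to\widehat H_p\to\widehat G_p$, and the first map of an injective composite is injective; hence $\widehat{H_0}_p\to\widehat H_p$ is injective, that is, $H_0\leq_{t(p)}H$.

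Next I would compare two short exact sequences of pro-$p$ groups. Applying pro-$p$ completion to $1\to N\to G\to Q\to 1$ always gives an exact sequence $\widehat N_p\to\widehat G_p\to\widehat Q_p\to 1$, and the hypothesis $N\leq_{t(p)}G$ — equivalently, injectivity of $\widehat N_p\to\widehat G_p$ — upgrades this to a short exact sequence $1\to\widehat N_p\to\widehat G_p\to\widehat Q_p\to 1$. Likewise, using $H_0\leq_{t(p)}H$ just proved, pro-$p$ completion of $1\to H_0\to H\to\bar H\to 1$ yields a short exact sequence $1\to\widehat{H_0}_p\to\widehat H_p\to\widehat{\bar H}_p\to 1$. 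The inclusions $H_0\hookrightarrow N$, $H\hookrightarrow G$ and $\bar H\hookrightarrow Q$ induce a commutative ladder between these two short exact sequences, whose left vertical arrow $\widehat{H_0}_p\to\widehat N_p$ is injective (since $H_0\leq_{t(p)}N$) and whose right vertical arrow $\widehat{\bar H}_p\to\widehat Q_p$ is injective (since $Q$ is $p$-homogeneous, so $\bar H\leq_{t(p)}Q$). A routine diagram chase — the five lemma — then forces the middle arrow $\widehat H_p\to\widehat G_p$ to be injective, which is the statement $H\leq_{t(p)}G$, and the proof is complete.

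The step requiring the most care is the construction of these two pro-$p$ short exact sequences, and in particular their left-exactness: this is exactly where the hypotheses are consumed — ``$N\leq_{t(p)}G$'' for the bottom row, and the $p$-homogeneity of $N$ and $Q$ (through $H_0\leq_{t(p)}N$, $\bar H\leq_{t(p)}Q$, and hence $H_0\leq_{t(p)}H$) for the top row and the two outer vertical arrows — all via the equivalence, recorded in the terminology above, between topological $p$-embedding of a subgroup and injectivity of the induced map of pro-$p$ completions. The hypothesis $G\in\mathcal{FS}$ is the ingredient one would want for a more computational proof in the spirit of Lemma~3.4 (replacing witnessing subgroups by their $G$-cores and using $\mathcal{FS}$ to keep all the indices involved finite $p$-powers, thereby combining an ``$N$-part'' and a ``$Q$-part'' by hand); the homological argument sketched above does not seem to require it.
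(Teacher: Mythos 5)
Your proof is correct, but it takes a genuinely different route from the paper's. The paper argues entirely at the level of finite-index subgroups: it first treats the case where $N$ is a finite $p$-group (using $N\leq_{t(p)}G$ to produce $U\unlhd_p G$ with $U\cap N=1$, then invoking Lemma~3.4 --- which is where the hypothesis $G\in\mathcal{FS}$ is consumed, in order to replace a witnessing subgroup of $U$ by its core in $G$), and then reduces the general case to that one by choosing $L\unlhd_p G$ with $L\cap H\cap N\leq K\cap N$ and working modulo $M=N\cap L$. You instead pass to pro-$p$ completions and run the four/five-lemma chase on the ladder of two short exact sequences; the only nontrivial input is that $t(p)$-embedding of the kernel upgrades the always-exact sequence $\hat{N}_p\to\hat{G}_p\to\hat{Q}_p\to 1$ to a short exact sequence (exactness at $\hat{G}_p$ in fact holds unconditionally, because the image of $\hat{N}_p$ is the closure of the image of $N$, which is precisely the kernel of $\hat{G}_p\to\hat{Q}_p$; this is the same fact the paper uses silently when it writes the exact rows in the proof of Lemma~3.6). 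Your reduction of everything to injectivity statements via the equivalence recorded in the paper's notation section is legitimate, and your side remark is right: the hypothesis $G\in\mathcal{FS}$ is superfluous for this lemma, though harmless in the paper since it is only applied to solvable FAR-groups, which lie in $\mathcal{FS}$ by Proposition~2.3. What your approach buys is brevity and this extra generality; what the paper's buys is an elementary argument that never leaves the category of discrete groups and exhibits the witnessing subgroup explicitly.
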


\begin{proof} First assume that $N$ is a finite $p$-group. Then there exists $U\unlhd_p G$ such that 
$U\cap N=1$. Since $U$ is isomorphic to a subgroup of $Q$, it is $p$-homogeneous. Thus, by Lemma 3.4, $G$ is $p$-homogeneous.
  
Now we consider the general case. Let $H\leq G$ and $K\unlhd_p H$. Then $K\cap N\unlhd_p H\cap N$. Since $N$ is $p$-homogeneous and $N\leq_{t(p)} G$, there exists $L\unlhd_p G$ such that 
$L\cap H\cap N\leq K\cap N$. Let $M=N\cap L$. By the case proved above, $G/M$ is $p$-homogeneous. Thus there is a subgroup $P\unlhd_p G$ such that $M\leq P$ and $P\cap HM\leq KM$. Suppose $x\in P\cap H$. Then $x=km$ for $k\in K$ and $m\in M$. It follows that $m\in H\cap M$, which implies that $m\in K$. Consequently, $P\cap H\leq K$. Therefore, $H\leq_{t(p)} G$. 
\end{proof}

\begin{lemma} Let $p$ be a prime. Assume $1\longrightarrow N\longrightarrow G\longrightarrow Q\longrightarrow 1$ is a short exact sequence of groups such that  $N\leq_{t(p)} G$. If the $p$-residuals of both $N$ and $Q$ are $p'$-groups, then the $p$-residual of $G$ is also a $p'$-group. 
\end{lemma}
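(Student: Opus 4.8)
The plan is to track the $p$-residual of $G$ through the given extension. Write $R_p(H)$ for the $p$-residual of a group $H$, i.e.\ $R_p(H)=\bigcap\{K: K\unlhd_p H\}$, the kernel of the pro-$p$ completion map $H\to\hat H_p$. The argument proceeds in three steps: (a) compute $R_p(G)\cap N$ exactly, using the hypothesis $N\leq_{t(p)}G$; (b) bound the image of $R_p(G)$ in $Q$; (c) assemble the resulting short exact sequence and invoke the elementary fact that an extension of a $p'$-group by a $p'$-group is again a $p'$-group.

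For step (a) I would show $R_p(G)\cap N=R_p(N)$. The inclusion $R_p(N)\subseteq R_p(G)\cap N$ holds for any normal subgroup: for each $U\unlhd_p G$ the intersection $U\cap N$ is normal in $N$ with $[N:U\cap N]=[UN:U]$ a power of $p$, so $U\cap N$ is one of the subgroups defining $R_p(N)$, whence $R_p(N)\subseteq U\cap N\subseteq U$; intersecting over all $U\unlhd_p G$ gives $R_p(N)\subseteq R_p(G)$, and $R_p(N)\subseteq N$ is clear. The reverse inclusion is where $N\leq_{t(p)}G$ enters: given $K\unlhd_p N$, that hypothesis supplies $U\unlhd_p G$ with $U\cap N\leq K$, so $R_p(G)\cap N\subseteq U\cap N\subseteq K$; intersecting over all $K\unlhd_p N$ yields $R_p(G)\cap N\subseteq R_p(N)$. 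This inclusion is the crux of the proof; everything else is formal.

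For step (b), I would note that $R_p(G)N/N\leq R_p(Q)$: for any $V\unlhd_p Q$ its full preimage $\widetilde V$ in $G$ contains $N$ and is normal of $p$-power index in $G$, so $R_p(G)\leq\widetilde V$ and hence the image of $R_p(G)$ in $Q$ lies in $\widetilde V/N=V$; now intersect over all such $V$. Steps (a) and (b) together produce a short exact sequence
\[ 1\longrightarrow R_p(N)\longrightarrow R_p(G)\longrightarrow R_p(G)N/N\longrightarrow 1, \]
in which $R_p(N)$ is a $p'$-group by hypothesis and $R_p(G)N/N$, being a subgroup of $R_p(Q)$, is a $p'$-group as well. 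Finally, for step (c): any $g\in R_p(G)$ has image of some finite order $m$ prime to $p$ in $R_p(G)N/N$, so $g^m\in R_p(N)$ has some finite order $n$ prime to $p$, and therefore $g^{mn}=1$ with $\gcd(mn,p)=1$. Hence $R_p(G)$ is a $p'$-group, as desired.

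I do not foresee a serious obstacle: the proof is short and essentially formal. The only subtlety worth flagging is that the identity $R_p(G)\cap N=R_p(N)$ genuinely requires $N\leq_{t(p)}G$ — without it one has only $R_p(G)\cap N\supseteq R_p(N)$, which does not suffice to conclude that $R_p(G)\cap N$ is a $p'$-group.
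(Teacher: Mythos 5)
Your proof is correct and is essentially the paper's argument in unpacked form: the paper phrases it as a diagram chase on the map of short exact sequences $1\to N\to G\to Q\to 1$ and $1\to \hat{N}_p\to \hat{G}_p\to \hat{Q}_p\to 1$, which amounts exactly to your identification $R_p(G)\cap N=R_p(N)$ (where $N\leq_{t(p)}G$ is used), the containment of the image of $R_p(G)$ in $R_p(Q)$, and the closure of $p'$-groups under extensions.
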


\begin{proof}  Consider the commutative diagram 
\begin{displaymath} \begin{CD}
1 @>>> N@>>> G @>>> Q @>>>1\\
&& @VVV @VVV @VVV && \\
1 @>>> \hat{N}_p @>>> \hat{G}_p @>>> \hat{Q}_p @>>>1
\end{CD} \end{displaymath}
with exact rows whose vertical maps are $p$-completion maps.  Since the kernels of the first and third vertical maps are both $p'$-groups, this holds for the second as well. 
\end{proof}

Next we observe that any nilpotent FAR-group without $p$ in its spectrum fulfills properties (i), (ii), and (iii) of Theorem 3.2. 

\begin{proposition} Let $G$ be a nilpotent FAR-group. If $p$ is a prime such that $p\notin {\rm spec}(G)$, then $G$ satisfies the following three properties.
\vspace{5pt}

(i) Every section of $G$ is in $\mathcal{G}(p)$.
\vspace{5pt}

(ii) The $p$-residual of $G$ is a $p'$-group.
\vspace{5pt}

(iii) The pro-$p$ topology on $G$ is homogeneous. 
\end{proposition}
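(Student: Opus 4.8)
The plan is to read off (i) from Theorem 2.12 and then obtain (ii) and (iii) by a joint induction on the nilpotency class of $G$, with the abelian case carrying all the real weight. For (i), note that any section $S$ of $G$ is again a nilpotent FAR-group, and $S$ cannot contain a subgroup isomorphic to $C_{p^\infty}$: such a subgroup would itself be a section of $G$, forcing $p\in{\rm spec}(G)$. Hence $S\in\mathcal{G}(p)$ by Theorem 2.12.

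For (ii) and (iii) I would induct on the nilpotency class, the base case (abelian, or trivial) being handled below. For the inductive step set $N=Z(G)$. Then $G/Z(G)$ is a nilpotent FAR-group of strictly smaller class with $p\notin{\rm spec}(G/Z(G))$, since its sections are sections of $G$; so $G/Z(G)$ satisfies (ii) and (iii) by the inductive hypothesis, while $Z(G)$ satisfies them by the abelian case. By part (i), $G/Z(G)\in\mathcal{G}(p)$, so Lemma 2.8 gives $Z(G)\leq_{t(p)}G$, and $G\in\mathcal{FS}$ by Proposition 2.1. Feeding these facts into Lemma 3.5 propagates $p$-homogeneity from $Z(G)$ and $G/Z(G)$ to $G$, giving (iii); feeding them into Lemma 3.6 does the same for the property ``the $p$-residual is a $p'$-group,'' giving (ii). Thus everything reduces to the abelian case.

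So suppose $A$ is abelian. For (iii): for any $H\leq A$, the quotient $A/H$ is an abelian FAR-group with no subgroup isomorphic to $C_{p^\infty}$ (again such a subgroup would be a section of $A$), so $A/H\in\mathcal{G}(p)$ by Theorem 2.12; since $H\leq Z(A)$, Lemma 2.8 yields $H\leq_{t(p)}A$, and as $H$ was arbitrary, $A$ is $p$-homogeneous. For (ii), let $T_{p'}$ be the $p'$-torsion subgroup of $A$ and put $\bar A=A/T_{p'}$; the torsion subgroup of $\bar A$ is isomorphic to the $p$-torsion subgroup of $A$, which has finite $p$-rank and contains no $C_{p^\infty}$, hence is finite. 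The natural map $A\to\bar A$ carries the $p$-residual of $A$ into that of $\bar A$, so it suffices to show $\bar A$ is residually $p$-finite (then the $p$-residual of $A$ lies in $T_{p'}$, a $p'$-group). Since $\bar A$ is an abelian FAR-group with finite torsion subgroup and finite torsion-free rank, each $\bar A/p^k\bar A$ is a finite $p$-group, so the subgroups $p^k\bar A$ are cofinal among those of finite $p$-power index; hence it is enough to check $\bigcap_k p^k\bar A=1$. This intersection is a $p$-divisible subgroup $E$ of $\bar A$; as a $p$-divisible subgroup of the finite torsion subgroup is trivial, $E$ is torsion-free, and a nontrivial torsion-free $p$-divisible abelian group contains a copy of $\mathbb{Z}[1/p]$, hence has $C_{p^\infty}=\mathbb{Z}[1/p]/\mathbb{Z}$ as a section. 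Lifting, $C_{p^\infty}$ would be a section of $A$, contradicting $p\notin{\rm spec}(A)$; so $E=1$.

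The crux is precisely this abelian base case, and the decisive point is to exploit that $p\notin{\rm spec}(G)$ (not merely the absence of $C_{p^\infty}$-subgroups), so that quotients and sections remain governed by Theorem 2.12 and no $p$-divisible subgroup can hide inside $\bar A$. The one spot calling for a little care is the verification that each $\bar A/p^k\bar A$ is finite, which follows from the finiteness of the torsion subgroup together with the finite torsion-free rank.
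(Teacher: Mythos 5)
Your proof is correct and follows essentially the same route as the paper: (i) is read off from Theorem 2.12, and (ii) and (iii) are propagated up a central series via Lemma 2.8 together with Lemmas 3.5 and 3.6 (the paper refines the series to $C_\infty$ and torsion abelian factors and simply asserts that these satisfy (ii) and (iii), whereas you use the upper central series and prove the abelian base case in full, including the $\bigcap_k p^k\bar{A}$ argument). The only blemishes are minor: the $\mathcal{FS}$ statement you invoke is Proposition 2.3, not 2.1, and the $p$-divisibility of $\bigcap_k p^k\bar{A}$ deserves its one-line justification (the solution set of $py=x$ is a finite coset of the $p$-torsion, so by pigeonhole some solution lies in every $p^k\bar{A}$).
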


\begin{proof} Property (i) follows at once from Theorem 2.12. To establish the other two properties, we observe that the group $G$ has a central series whose factor groups are each either a copy of $C_{\infty}$ or a torsion abelian FAR-group without $p$ in its spectrum. Moreover, the latter two species of groups satisfy both (ii) and (iii). Therefore, by Lemma 2.8, Lemma 3.5, and Lemma 3.6, $G$ satisfies (ii) and (iii). 
\end{proof}

Now we are prepared to prove Theorem 3.2.

\begin{proof}[Proof of Theorem 3.2]  The argument proceeds very much like the proof of Theorem 3.1. As before, we reason by induction on the derived length of $G$. Having already disposed of the abelian case in Proposition 3.7, we assume that the derived length of $G$ exceeds one. Let $A$ be the last term in the derived series of $G$. By the inductive hypothesis, $G$ contains a normal subgroup $M$ of finite index such that  $A\leq M$ and $M/A$ satisfies properties (i), (ii), and (iii). Again we take $N=C_M(A/A^p)$. As argued in the proof of Theorem 3.1, we have that $A\leq_{t(p)} N$. Therefore, by Lemma 3.5, the pro-$p$ topology on $N$ is homogeneous. To verify the other two properties, let $\bar{N}$ be a section of $N$. Then $\bar{N}$ fits into an extension of the form 
$1\longrightarrow \bar{A}\longrightarrow \bar{N}\longrightarrow  \overline{N/A}\longrightarrow 1$, where $\bar{A}$ and $\overline{N/A}$ are sections of $A$ and $N/A$, respectively. Also, the $p$-homogeneity of $N$ ensures that $\bar{A}\leq_{t(p)} \bar{N}$. Thus $\bar{N}$ must be in $\mathcal{G}(p)$ by virtue of Lemma 2.7, and the $p$-residual of $\bar{N}$ is a $p'$-group in view of Lemma 3.6. 
\end{proof}

\begin{remark}{\rm We point out that a special case of Theorem 3.2 appears as [{\bf 1}, Corollary 2.4]. There it is observed that a polycyclic group contains a normal subgroup of finite index satisfying property (ii).}
\end{remark}

\end{document}